\newcommand{\bs}{\boldsymbol}
\newcommand{\E}{\mathbb{E}}
\theoremstyle{plain}
\newtheorem{theorem}{Theorem}
\newtheorem{proposition}[theorem]{Proposition}
\newtheorem{lemma}[theorem]{Lemma}
\newtheorem{remark}[theorem]{Remark}
\newtheorem{example}[theorem]{Example}
\def\E{\mathbb E}
\def\one{\mathbf 1}
\def\vp{\varphi}
\def\R{\mathbb R}
\def\N{\mathbb N}
\def\d{\mathrm d}
\def\cum{\text{cum}}
\def\xx{\mathbf x}
\def\ov{\overline}
\def\HMAX{^{\rm +}}
\def\HMIN{^{\rm -}}
\def\td{\text{\rm d}}
\newcommand{\netheo}[1]{{Theorem \ref{#1}}}
\newcommand{\proofprop}[1]{\textsc{\bf Proof of Proposition} \ref{#1}:}
\newcommand{\abs}[1]{\left\lvert #1 \right\rvert}
\newcommand{\vk}[1]{\boldsymbol{#1}}
\newcommand{\pk}[1]{\mathbb{P} \left\{ #1 \right \} }
\newcommand{\s}{\sigma}
\title{Functional central limit theorem for subgraph counts in a dynamic random connection model}
\author{Rajat Subhra Hazra}
\author{Nikolai Kriukov}
\author{Michel Mandjes}
\author{Moritz Otto}
\begin{document}

\begin{abstract}
We prove a functional central limit theorem for subgraph counts {in} a dynamic version of the random connection model. To establish tightness, we develop a dynamic extension of the cumulant method.

\bigskip

\noindent {\sc Affiliations.}
Rajat Subhra Hazra is with Mathematical Institute, Leiden University, 
The Netherlands. \url{r.s.hazra@math.leidenuniv.nl}

\medskip

\noindent Nikolai Kriukov is with Korteweg-de Vries Institute for Mathematics, University of Amsterdam, The Netherlands. \url{n.kriukov@uva.nl}

\medskip

\noindent Michel Mandjes is with Mathematical Institute, Leiden University, 
The Netherlands, and Korteweg-de Vries Institute for Mathematics, University of Amsterdam, The Netherlands. \url{m.r.h.mandjes@uva.nl}

\medskip

\noindent
Moritz Otto is with Mathematical Institute, Leiden University, 
The Netherlands.\\ \url{m.f.p.otto@math.leidenuniv.nl}

\bigskip
\noindent{\sc Keywords.} Random graphs, stochastic geometry, functional limit laws 

\medskip
\noindent{\sc MSC2020 subject classifications.} 05C80, 60D05, 60G55.

\bigskip

\noindent  {\sc Acknowledgments.}
The research of Kriukov, Mandjes and Otto was supported by the European Union’s Horizon 2020 research and innovation programme under the Marie Sklodowska-Curie grant agreement no.\ 945045, and by the NWO Gravitation project N{\sc etworks} under grant agreement no.\ 024.002.003. \includegraphics[height=1em]{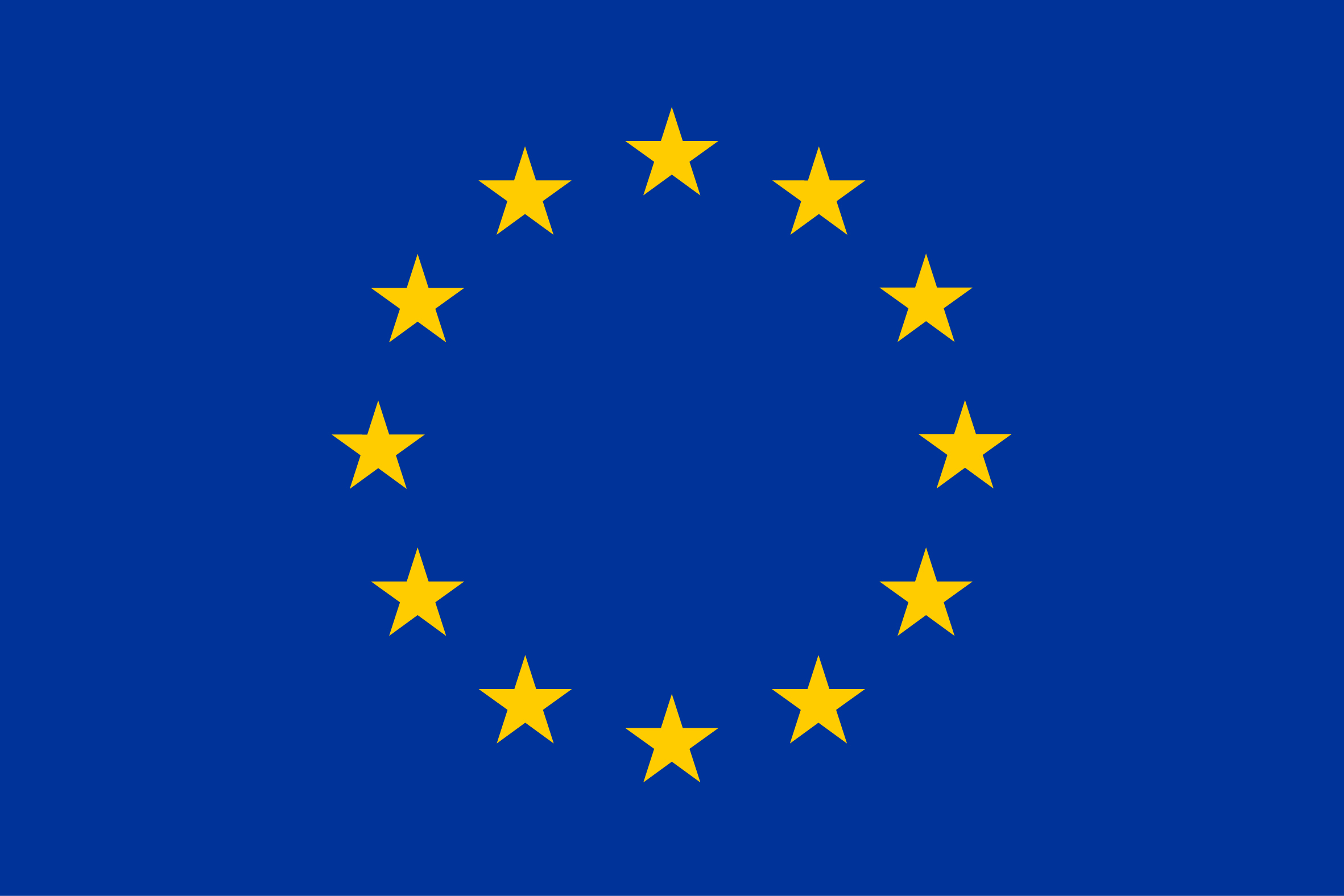} 

\bigskip

\end{abstract}

\date{\today}

\maketitle

\newcommand{\ABS}[1]{\left(#1\right)} 
\newcommand{\veps}{\varepsilon} 

\section{Introduction}

The {\it random connection model} (RCM) is a fundamental stochastic geometric model used to describe spatial networks in which nodes are connected with a probability that depends on their mutual distance. More formally, given a homogeneous Poisson point process in $\mathbb{R}^d$, two points $x$ and $y$ are connected independently with probability that depends on the distance between $x$ and $y$. The RCM generalizes the classical {\em random geometric graph} (or Gilbert graph) by allowing soft (probabilistic) connections instead of deterministic ones. It has been widely studied in the context of percolation theory, continuum random graphs, and wireless networks (see e.g., Meester \& Roy \cite{MeesterRoy1996}, Penrose \cite{Penrose1991}, and Roy \cite{Roy2011}).

In the study of the RCM, an important branch of the literature concerns {\it subgraph counts} where one investigates the number of occurrences of a fixed finite graph $H$ (such as a triangle or a star) as an induced or embedded subgraph of the RCM. These counts not only capture local connectivity patterns but also play a key role in understanding the higher-order structure and limiting behavior of the model. Subgraph counts have been studied using tools from Poisson point process theory, cumulants, Stein’s method, and U-statistics (see, e.g., Penrose \cite{Penrose2003}, Schulte  \& Th\"ale \cite{chaos}, Liu \& Privault \cite{liu2024normal} and Heerten {\it et al.} \cite{heerten}). In particular, asymptotic normality for these counts provide insights into scaling limits.

\medskip

A major trend in the random graph literature is the increasing focus on {\it dynamically evolving networks}, in which edges and/or vertices change stochastically over time to better reflect the behavior of real-world systems such as communication, social, or biological networks. These so-called {\it temporal random graphs}, as described by e.g.\ Holme \& Saram\"aki \cite{HolmeSaramaki2012}, capture both the structural and dynamic complexity of such systems and give rise to new mathematical challenges and insights. This shift reflects a growing interest in understanding not only static connectivity properties, but also how network behavior evolves over time. In principle, any property studied in the static setting can be extended to the dynamic context. For instance, the sample-path large deviation principle developed in Braunsteins {\it et al.} \cite{Braunsteins2023} generalizes the foundational result for the static Erd\H{o}s--R\'enyi graph by Chatterjee \& Varadhan \cite{ChatterjeeVaradhan2011}, while the maximum eigenvalue process studied by Hazra {\it et al.} \cite{hkm25a} generalizes the static result from Erd\H{o}s {\it et al.} \cite{ErdosKnowlesYauYin2013}.

\medskip

In this paper, we consider a dynamic variant of the RCM, in which point locations and (potential) edges are sampled once and remain fixed, but where each point alternates between being active and inactive over time. Only edges between pairs of simultaneously active points are included into the graph. This setting naturally gives rise to a multivariate subgraph count process, tracking the number of occurrences of various subgraphs (such as edges, triangles, or stars) over time. The main result of the paper is a functional central limit theorem (CLT) for a centered and normalized version of this multivariate subgraph count process. As an application of this result, we present a functional CLT for the clustering coefficient process in the dynamic RCM. 

In Section \ref{sec:main} we formally introduce our model and state our main result. Our proofs build on the recently developed cumulant method, as introduced in a series of recent works (see e.g.\ Last {\it et al.} \cite{last2014moments}, Schulte \& Th\"ale \cite{schulte2016cumulants, chaos}). This method establishes a connection between Poisson U-statistics and a specific class of partitions, allowing their cumulants to be expressed in terms of certain tensor products (see Section \ref{sec:prelim}). After having determined the candidate mean and covariances to feature in the functional CLT (Section \ref{section:exp_and_cov}),  the fundamental challenge is to extend the use of cumulants beyond establishing finite-dimensional convergence (Section \ref{section:fdd_convergence}) to the functional setting.  We do so by effectively applying them to prove tightness (Section \ref{section_tightness}), thereby illustrating the broad applicability of this approach.

\section{Model and main results}\label{sec:main}

\subsection{Model description}
Denote
$W := [-\tfrac{1}{2}, \tfrac{1}{2}]^d,$
and let $\eta_n$ be a Poisson point process on
$W \times \mathbb{D}([0,T],\{0,1\}),$ with intensity measure $n\, \d x \otimes \mathbb Q$. Here $T > 0$ is the time horizon, and $\mathbb{Q}$ denotes the law of a two-state Markov jump process on $\{0,1\}$ with exponential waiting times, which jumps from 0 to 1 at rate $\mu$ and from 1 to 0 at rate $\lambda$, started at its stationary distribution. In order to avoid potential boundary effects, throughout this paper we work with the torus metric on $W$, i.e., we identify opposite faces of $W$ (i.e., the space `wraps around' like a torus). We write $P=(X,(A(t))_{t \in [0,T]})$ for a canonical element in $\eta_n$, where $X \in W$ is the location of $P$ and $A(t)$ is its status process at time $t \in [0,T]$.  

Our model is a random connection model (RCM) in which the underlying graph's vertices alternate between being present and absent as follows. Let $\mu>0,\,\lambda>0$ be model parameters. At time $t=0$, each of the points in $\eta_n$ is {\em active} independently with probability $\varrho :={\mu}/{(\mu+\lambda)}$, and {\em inactive} else. As time goes on, the points in $\eta_n$  independently alternate between being active and inactive: inactive (active) points become active (inactive) after an exponentially distributed time with rate $\mu$ ($\lambda$, respectively). 
Note that the expected number of active Poisson points is $\varrho\, n$ for all $t\ge 0$.

Let $\mathcal X \subset W$ be locally finite. In the RCM on $\mathcal X$, between any pair of distinct points $X_i, X_j \in \mathcal X$ we place a (potential) edge independently with probability $\vp_n(X_i - X_j)$, where
\begin{align}
	\label{eq:prob}
	\vp_n({x}):=	\vp\bigg(\frac{\|{x}\|^d}{\nu_n}\bigg).
\end{align}
Here, $(\nu_n)_{n \ge 1}$ is a positive real-valued sequence with $\nu_n \to 0$ as $n \to \infty$, and $\vp: [0,\infty) \to [0, 1]$  (typically referred to as the \emph{profile function}) is assumed to be non-increasing and to satisfy the normalization $\int \vp(t) \,\d t = 1$.

The edges are sampled once and for all, but they appear in the RCM only when both adjacent nodes are simultaneously active (see Figure \ref{fig:drcm}). The resulting dynamic random graph process we denote by $\mathrm{RCM}(\mathcal X,\vp_n)$.

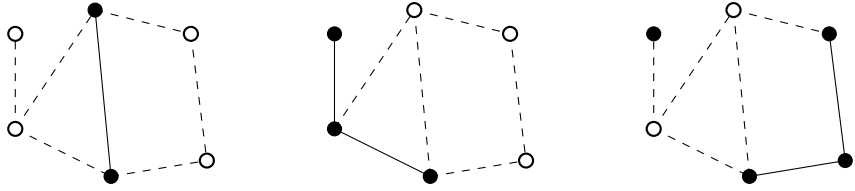
\begin{figure}
\begin{center}
\begin{tikzpicture}[scale=1.05]

\def\nodes{
  \coordinate (A) at (0,0);
  \coordinate (B) at (1,1.5);
  \coordinate (C) at (2.2,1.2);
  \coordinate (D) at (1.2,-0.6);
  \coordinate (E) at (2.4,-0.4);
  \coordinate (F) at (0,1.2);
}

\begin{scope}[xshift=0cm]
  \nodes
  
  \draw[dashed] (A) -- (B);
  \draw[dashed] (B) -- (C);
  \draw[dashed] (A) -- (D);
  \draw (D) -- (B);
  \draw[dashed] (C) -- (E);
  \draw[dashed] (D) -- (E);
  \draw[dashed] (A) -- (F);

  \filldraw[white, draw=black, thick] (A) circle (2.5pt); 
  \filldraw[black] (B) circle (2.5pt);                     
  \filldraw[white, draw=black, thick] (C) circle (2.5pt);
  \filldraw[black] (D) circle (2.5pt);
  \filldraw[white, draw=black, thick] (E) circle (2.5pt);
  \filldraw[white, draw=black, thick] (F) circle (2.5pt);
\end{scope}

\begin{scope}[xshift=4cm]
  \nodes

  \draw[dashed] (A) -- (B);
  \draw[dashed] (B) -- (C);
  \draw (A) -- (D);
  \draw[dashed] (D) -- (B);
  \draw[dashed] (C) -- (E);
  \draw[dashed] (D) -- (E);
  \draw (A) -- (F);

  \filldraw[black] (A) circle (2.5pt);
  \filldraw[white, draw=black, thick] (B) circle (2.5pt);
  \filldraw[white, draw=black, thick] (C) circle (2.5pt);
  \filldraw[black] (D) circle (2.5pt);
  \filldraw[white, draw=black, thick] (E) circle (2.5pt);
  \filldraw[black] (F) circle (2.5pt);
\end{scope}

\begin{scope}[xshift=8cm]
  \nodes

  \draw[dashed] (A) -- (B);
  \draw[dashed] (B) -- (C);
  \draw[dashed] (A) -- (D);
  \draw[dashed] (D) -- (B);
  \draw (C) -- (E);
  \draw (D) -- (E);
  \draw[dashed] (A) -- (F);

  \filldraw[white, draw=black, thick] (A) circle (2.5pt);
  \filldraw[white, draw=black, thick] (B) circle (2.5pt);
  \filldraw[black] (C) circle (2.5pt);
  \filldraw[black] (D) circle (2.5pt);
  \filldraw[black] (E) circle (2.5pt);
  \filldraw[black] (F) circle (2.5pt);
\end{scope}

\end{tikzpicture}
\end{center}
 \caption{The dynamic RCM at different times. Nodes alternate between active (full dot) and inactive (empty dot). Potential edges (dotted line) appear (solid line) when both adjacent nodes are active.}
    \label{fig:drcm}
\end{figure}

We proceed by defining the multivariate subgraph process that we focus on in this paper. 
To this end, we let  $G_i$ be a given connected graph on vertices $1,\dots,q_i$ and edge set $E(G_i)$, for $i=1,\dots,m$, and we let $a_i :=|{\rm Aut}(G_i)|$ be the cardinality of the automorphism group of $G_i$. Then we define $\Gamma_{n,i}(t)$ as the number of active subgraphs isomorphic to $G_i$ in the RCM on (the projection of $\eta_n$ onto $\R^d$), i.e., with ${\bs P}_{q_i}:= (P_1,\dots,P_{q_i})$ and ${\bs X}_{q_i}:=\{X_1,\ldots, X_{q_i}\}$,
\begin{align*}
\Gamma_{n,i}(t):&= \frac{1}{a_i} \sum_{{\bs P}_{q_i} \in \eta_{n,\neq}^{q_i}} \one\{\forall (k,\ell) \in E(G_i): X_k \leftrightarrow X_\ell \in \textrm{RCM}({\bs X}_{q_i},\vp_n)\} \cdot\prod_{k=1}^{q_i} A_k(t)
\end{align*}
where $P_k=(X_k,(A_k(t))_t)$ for $k \in [q_i]$ and $\eta_{n,\neq}^{q_i}$ is the collection of all $q_i$-tuples of pairwise distinct elements in $\eta_n$; the corresponding $m$-dimensional process is denoted by $\vk\Gamma_{n}(\cdot) = \bigl(\Gamma_{n,1}(\cdot),\ldots,\Gamma_{n,m}(\cdot)\bigr)$.

\subsection{Main result} \label{sec:main_res}
The goal of this paper is to establish a functional CLT for the $m$-dimensional process $\vk\Gamma_{n}(\cdot)$,
defined on $[0,T]$ for a given time horizon $T>0$. 
To state our result in a compact manner, we introduce some notation. 
Let $\tt G_{i,j}\HMAX$ be the collection of all pairs of graphs ${\bs H}\equiv (H_1,H_2)$ with $H_1$ being a graph on nodes $1,\dots,q_i$ isomorphic to $G_i$ and $H_2$  a graph on nodes $1,q_i+1,\dots,q_i+q_j-1$ isomorphic to $G_j$. Likewise, $\tt G_{i,j}\HMIN $ is the collection of all graph pairs ${\bs H}\equiv (H_1,H_2)$ where $H_1$ is a graph on nodes $1,\dots,q_i$ isomorphic to $G_i$ and $H_2$ is a graph on nodes $1,\dots,q_j$ isomorphic to $G_j$. Then,
\begin{align*}
    {\mathbb F}_{ij}\HMAX :=\sum_{{\bs H}\in{\tt G}_{ij}\HMAX  }\frac{F({\bs H})}{(q_i-1)!(q_j-1)!},\quad  {\mathbb F}_{ij}\HMIN:=\sum_{{\bs H}\in{\tt G}_{ij}\HMIN }\frac{F({\bs H})}{(q_i \wedge q_j)!}.
\end{align*}
Moreover, 
\begin{align}\label{eq:defF2}
F({\bs H}):= \int_{(\R^d)^{q-1}} \prod_{(u,v) \in E(H_1)\cup E(H_2)} \vp(\|y_u-y_v\|^d) \,\d(y_2,\dots,y_q),
\end{align}
where $y_1:=0$ and $q:=|V(H_1)\cup V(H_2)|$. {Denote for simplicity $F(H):=F(H,H)$.}

We now specify what conditions are imposed on $\nu_n$. In the first place,
\begin{equation}\label{eq:nu}\lim_{n\to\infty}\nu_n = 0, \quad \lim_{n\to\infty} n^{q_i} \nu_n^{q_i-1}  =\infty\quad\text{for all } i \in [m].\end{equation}
We distinguish between the {\it dense regime}  and the {\it sparse regime}, defined via
\begin{align*}
    {\mathscr D} := \{\nu_n: n \nu_n \to \infty\},\quad
    {\mathscr S} := \{\nu_n: n \nu_n \to 0\},
\end{align*}
respectively. 
For instance, picking $\nu_n:=n^{\gamma}$, condition \eqref{eq:nu} yields that $-q_i/(q_i-1)< \gamma<0$ for all $i \in [m]$. In addition, $\nu_n\in{\mathscr D}$ iff $\gamma>-1$, whereas $\nu_n\in{\mathscr S}$ iff $\gamma<-1.$
Define, for any $n\in\N$, $i\in [m]$ and $t\in[0,T]$, the centered and normalized version of $\Gamma_{n,i}(\cdot)$: with $\psi_{n,i}:=\varrho^{q_i}n^{q_i-1/2}\nu_n^{q_i-1}$ if $\nu_n\in{\mathscr D}$ and $\smash{\psi_{n,i}:=\varrho^{q_i}\sqrt{n^{q_i}\nu_n^{q_i-1}}}$ if $\nu_n\in{\mathscr S}$,
\begin{align}
    \Gamma^{\star}_{n,i}(t) :=       
    \frac{\Gamma_{n,i}(t) - \E[\Gamma_{n,i}(t)]}{\psi_{n,i}}.
    \label{gamma_star_def}
\end{align}
In addition,  with
$Z(t):=1+({\lambda}/{\mu})\,e^{-(\lambda+\mu){t} }$, let $\vk\Gamma(\cdot)$ be a centered Gaussian process with covariance matrix $\Sigma(s,t)=(\Sigma_{i,j}(s,t))_{i,j\in [m]}$ for $s,t\in[0,T]$, given by
\begin{align*}
\Sigma_{i,j}(s,t)
    &= {\rm Cov}(\Gamma_i(s),\Gamma_j(t))
     = {\rm Cov}(\Gamma_i(t),\Gamma_j(s)) \\
    &= 
    \begin{cases}
        Z(|t-s|)\,{\mathbb F}_{ij}\HMAX ,& \nu_n \in {\mathscr D},\\[4pt]
        \left(Z(|t-s|)\right)^{q_i}\,{\bs 1}\{q_i=q_j\}\,{\mathbb F}_{ij}\HMIN,& \nu_n \in {\mathscr S}.
    \end{cases}
\end{align*}

\begin{theorem}\label{main} If $\nu_n$ satisfies \eqref{eq:nu}, then $\vk\Gamma_n^*(\cdot)\to\vk\Gamma(\cdot)$  as $n\to\infty$ (in distribution in $\mathbb{D}([0,T],\R^{m})$).
\end{theorem}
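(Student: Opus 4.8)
The plan is to establish the functional CLT via the standard two-pronged approach: (i) convergence of finite-dimensional distributions, and (ii) tightness in $\mathbb D([0,T],\R^m)$. For part (i), the strategy is to apply the Cram\'er--Wold device and reduce the problem to showing that for any fixed times $t_1,\dots,t_r \in [0,T]$ and any linear combination, the resulting real-valued random variable converges to the appropriate Gaussian. The natural tool here is the cumulant method for Poisson U-statistics, as developed by Last, Peccati, Schulte and Th\"ale: one shows that the second cumulant converges to the prescribed covariance structure (this is precisely the content of the mean/covariance computation announced for Section~\ref{section:exp_and_cov}), and that all higher cumulants of the normalized statistic vanish as $n\to\infty$. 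The subgraph count $\Gamma_{n,i}(t)$ is a Poisson U-statistic of order $q_i$ with a kernel that factors over the edge-indicator and the product of activity indicators $\prod_k A_k(t)$; taking expectations over the independent Markov activity processes and over the independent edge variables, one is left with a spatial U-statistic whose cumulants admit the usual partition-sum representation involving the tensor products $F(\bs H)$. The distinction between the dense regime $\mathscr D$ and the sparse regime $\mathscr S$ arises from which partitions dominate the asymptotics of each cumulant order --- in the dense case the "connected through one common vertex" partitions dominate (giving $\mathbb F_{ij}\HMAX$), while in the sparse case only fully overlapping pairs survive (giving $\mathbb F_{ij}\HMIN$ with the constraint $q_i=q_j$).

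For part (ii), tightness, the plan is to use the moment criterion for $\mathbb D([0,T],\R^m)$: it suffices (together with a control of oscillations at a single time and at the endpoints) to bound an expression of the form $\E[\|\vk\Gamma_n^*(t)-\vk\Gamma_n^*(s)\|^{b}\,\|\vk\Gamma_n^*(u)-\vk\Gamma_n^*(t)\|^{b}]$ for $s\le t\le u$ by $C\,(u-s)^{1+\beta}$ for some $b,\beta>0$. Since the increments are themselves centered normalized Poisson U-statistics (differences of activity-indicator products at two times), the idea is again to expand this mixed moment into cumulants and to show that each contributing term carries enough smallness in $|t-s|$ and $|u-t|$. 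The key quantitative input is that $\E[|A_k(t)-A_k(s)|]$ and the relevant covariance increments of the Markov activity process are of order $|t-s|$ (equivalently, $Z(|t-s|)$ has a Lipschitz-type behaviour near the diagonal, since $Z(r)=1+(\lambda/\mu)e^{-(\lambda+\mu)r}$ is smooth with bounded derivative). Combined with the combinatorial bookkeeping from the cumulant expansion, one should get the required power of $(u-s)$; choosing the moment order $b$ large enough (which is permissible because all cumulants, hence all moments, are finite and suitably bounded uniformly in $n$) pushes the exponent above $1$.

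I expect the main obstacle to be precisely step (ii): extending the cumulant method --- which is classically a tool for finite-dimensional convergence --- to obtain the moment bounds needed for tightness. The difficulty is twofold. First, one must expand a \emph{product} of two increment-vectors (each of which is itself a sum over tuples) and organize the resulting huge sum of mixed moments into cumulant-type contributions indexed by partitions, keeping track simultaneously of the spatial decay (powers of $n$ and $\nu_n$, governed by the connectedness of the partition blocks) and of the temporal decay (powers of $|t-s|$ and $|u-t|$, coming from activity-increment factors). Second, one must verify that in \emph{every} such contribution the two sources of smallness combine to give the net exponent $1+\beta$ uniformly in $n$ --- in particular that the normalization $\psi_{n,i}$ exactly absorbs the leading spatial order so that subleading terms are genuinely negligible, and that no term has "all its temporal smallness eaten up" by a bad partition. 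Handling the dense and sparse regimes requires separate combinatorial analyses, and the sparse regime in particular is delicate because the surviving partitions are more rigid and the scaling $\psi_{n,i}=\varrho^{q_i}\sqrt{n^{q_i}\nu_n^{q_i-1}}$ leaves less room. Once these moment bounds are in place, tightness follows from the standard criterion, and together with the finite-dimensional convergence from part (i) this yields the claimed convergence $\vk\Gamma_n^*(\cdot)\to\vk\Gamma(\cdot)$ in $\mathbb D([0,T],\R^m)$.
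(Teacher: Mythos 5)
Your plan follows essentially the same route as the paper: finite-dimensional convergence via the Cram\'er--Wold device and the cumulant/diagram formulae for Poisson $U$-statistics (showing all cumulants of order at least three vanish under the normalization $\psi_{n,i}$), and tightness via Billingsley's product-increment criterion, expanding $\E\bigl[\|\vk\Gamma^{\star}_n(r)-\vk\Gamma^{\star}_n(s)\|^2\|\vk\Gamma^{\star}_n(s)-\vk\Gamma^{\star}_n(t)\|^2\bigr]$ through the moment formula over partitions and balancing the temporal smallness of the activity processes against the spatial powers of $n$ and $\nu_n$ dictated by connectivity of the graph induced by each partition (the paper works with second powers and obtains the exponent $2$ directly, so no higher moment order is needed). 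The one quantitative ingredient your sketch does not name, but which is indispensable for the partitions in which the same Poisson point contributes to both increments, is the two-interval bound $\mathbb{P}(A(r)\neq A(s),\,A(s)\neq A(t))\le \mathcal{C}|t-r|^2$ from Lemma \ref{edge_process_lem}; the first-order bound $\mathbb{P}(A(r)\neq A(s))\le \mathcal{C}|s-r|$ alone would only give exponent $1$ for those terms, which is insufficient for tightness.
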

\begin{remark}\label{rem_corr}
    In case $\nu_n\in\mathscr{D}$ we observe that we can write \[\mathbb{F}^{+}_{i,j} = (q_iF(G_i)/a_i)\,(q_jF(G_j)/a_j),\] implying that the components of ${\bs \Gamma}(\cdot)$ are perfectly correlated. Indeed,  ${\bs \Gamma}(\cdot)$ has the same distribution as ${\bs \Gamma}^{\prime}(\cdot)$ defined for each $i=1,\ldots,m$ as $\Gamma^{\prime}_i(\cdot) = q_iF(G_i)\xi(\cdot)/a_i$, where $\xi(\cdot)$ is centered Gaussian process with covariance function $Z(\abs{s-t})$. 
\end{remark}
 The following corollary presents an application of Theorem \ref{main}. Recall that the clustering coefficient is defined as the number of triangles divided by the number of wedges, sometimes multiplied by 3; cf.\  \cite[Eqn.\ (1.5.4)]{RvdH1}. We study a more general `subgraph ratio process':
\begin{align*}
    C_{n,G_1,G_2}(t) := \frac{a_1\,\Gamma_{n,1}(t)}{a_2\,{\Gamma_{n,2}(t)}}
\end{align*}
for connected graphs $G_1$, $G_2$ such that $\mathcal{V}(G_1) = \mathcal{V}(G_2) = q$ and {$G_2\subset G_1$}. Let $C_{G_1,G_2}(\cdot)$ denote a centered Gaussian process with covariance function 
 \begin{align*}
    \Sigma^{C}(s,t): = {\frac{a_1^2}{F^2(G_2)}\Sigma_{11}(s,t) - \frac{2a_1a_2 F(G_1)}{F^3(G_2)}\Sigma_{12}(s,t) + \frac{a_2^2F^2(G_1)}{ F^4(G_2)}\Sigma_{22}(s,t).}
 \end{align*}
 In addition, we define by $C^{\star}_{n,G_1,G_2}(\cdot)$ the centered and scaled version of the subgraph ratio process: for any $t\in[0,T]$, 
\begin{align*}
    C^{\star}_{n,G_1,G_2}(t) := \left(C_{n,G_1,G_2} - \frac{a_1}{a_2}\frac{\mathbb{E}[\Gamma_{n,1}(t)]}{{\mathbb{E}[\Gamma_{n,2}(t)]}}\right)\zeta_n,\quad\quad
    \zeta_n := \begin{cases} \sqrt{n}, & \nu_n\in{\mathscr D},\\
    \sqrt{n^q\nu_n^{q-1}}, & \nu_n\in{\mathscr S}.
    \end{cases}
\end{align*}
 As an immediate consequence of \netheo{main}, in combination with the Lemmas \ref{pF_asmpt} and \ref{lem:exp} that we establish later on, we have the following result. 

\begin{proposition}
    [Subgraph ratio process]\label{korr:clustering_coefficient} Let $G_1$, $G_2$ be two connected graphs, such that $\mathcal{V}(G_1) = \mathcal{V}(G_2) = q$ and $G_1\subset G_2$. Then, in distribution in $\mathbb{D}([0,T],\R^m)$,  we have $C^{\star}_{n,G_1,G_2}(\cdot)\to C_{G_1,G_2}(\cdot)$ as $n\to\infty$. In particular, if $\nu_n\in{\mathscr D}$, then $\Sigma^C(s,t)=0$. 
\end{proposition}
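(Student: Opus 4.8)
The plan is to derive the result as a consequence of Theorem \ref{main} via the continuous mapping theorem combined with the delta method adapted to the functional setting. First I would apply Theorem \ref{main} with $m=2$ to the pair $(G_1,G_2)$, obtaining joint convergence $\vk\Gamma_n^\star(\cdot)\to\vk\Gamma(\cdot)$ in $\mathbb{D}([0,T],\R^2)$. Using Lemma \ref{lem:exp} (the expectation asymptotics) I would identify the leading order of $\E[\Gamma_{n,i}(t)]$ so that $\E[\Gamma_{n,i}(t)]/\psi_{n,i}$, and more importantly the ratio $\tfrac{a_1}{a_2}\E[\Gamma_{n,1}(t)]/\E[\Gamma_{n,2}(t)]$, is controlled; in particular, since both graphs have $q$ vertices, the normalizing constants $\psi_{n,1}$ and $\psi_{n,2}$ differ only through $\varrho^{q_1}$ vs.\ $\varrho^{q_2}$ — but here $q_1=q_2=q$, so $\psi_{n,1}=\psi_{n,2}=:\psi_n$ and $\zeta_n = \psi_n^{-1}\cdot(\text{leading order of }\E[\Gamma_{n,2}(t)])$ up to constants, which is the key algebraic bookkeeping step.

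Next I would write the ratio process in terms of the fluctuations. Setting $\beta_{n,i}(t):=\E[\Gamma_{n,i}(t)]$, observe that
\begin{align*}
C_{n,G_1,G_2}(t)-\frac{a_1}{a_2}\frac{\beta_{n,1}(t)}{\beta_{n,2}(t)}
= \frac{a_1}{a_2}\left(\frac{\Gamma_{n,1}(t)}{\Gamma_{n,2}(t)}-\frac{\beta_{n,1}(t)}{\beta_{n,2}(t)}\right)
= \frac{a_1}{a_2}\cdot\frac{\beta_{n,2}(t)\,(\Gamma_{n,1}(t)-\beta_{n,1}(t))-\beta_{n,1}(t)\,(\Gamma_{n,2}(t)-\beta_{n,2}(t))}{\Gamma_{n,2}(t)\,\beta_{n,2}(t)}.
\end{align*}
Multiplying by $\zeta_n = \psi_n/\beta_{n,2}(t)$ up to a bounded factor (using Lemma \ref{pF_asmpt} and Lemma \ref{lem:exp} to match the growth rates), the numerator becomes, after dividing by $\psi_n$, a linear combination of $\Gamma^\star_{n,1}(t)$ and $\Gamma^\star_{n,2}(t)$ with coefficients $a_1\beta_{n,2}(t)/(a_2\beta_{n,2}(t)) \to a_1/a_2\cdot(\text{const})$ and similarly for the second term; while the denominator $\Gamma_{n,2}(t)\beta_{n,2}(t)/\psi_n^2$ converges (uniformly on $[0,T]$, in probability, since $\Gamma^\star_{n,2}\to\Gamma_2$ implies $\Gamma_{n,2}(t)/\beta_{n,2}(t)\to 1$ uniformly) to a deterministic limit. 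One then reads off the limiting coefficients: the map $\vk\Gamma\mapsto$ (appropriate linear combination) is continuous from $\mathbb{D}([0,T],\R^2)$ to $\mathbb{D}([0,T],\R)$, and applying the continuous mapping theorem together with Slutsky (to absorb the uniformly-convergent denominator and the convergent coefficients) yields $C^\star_{n,G_1,G_2}(\cdot)\to C_{G_1,G_2}(\cdot)$, where the limit is the Gaussian process obtained as that linear combination of $\Gamma_1,\Gamma_2$; expanding $\mathrm{Cov}$ of $\alpha_1\Gamma_1+\alpha_2\Gamma_2$ with the right $\alpha_i$ recovers exactly the stated $\Sigma^C(s,t)$. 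For the final claim, when $\nu_n\in\mathscr{D}$, Remark \ref{rem_corr} gives $\Gamma_i(\cdot)=q F(G_i)\xi(\cdot)/a_i$, so the linear combination $\tfrac{a_1}{F(G_2)}\Gamma_1-\tfrac{a_2 F(G_1)}{F^2(G_2)}\Gamma_2 = \tfrac{q F(G_1)}{F(G_2)}\xi - \tfrac{q F(G_1)}{F(G_2)}\xi = 0$ identically, hence $\Sigma^C\equiv 0$.

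The main obstacle I anticipate is the functional (uniform-in-$t$) handling of the denominator $\Gamma_{n,2}(t)$: one needs that $\inf_{t\in[0,T]}\Gamma_{n,2}(t)$ stays bounded away from $0$ with probability tending to $1$, and that $\Gamma_{n,2}(\cdot)/\beta_{n,2}(\cdot)\to 1$ in the uniform (sup) norm, not merely pointwise. This follows from the functional convergence $\Gamma^\star_{n,2}\to\Gamma_2$ in $\mathbb{D}([0,T],\R)$ (a tight sequence, with continuous Gaussian limit) since $\Gamma_{n,2}(t)/\beta_{n,2}(t) = 1 + \psi_n/\beta_{n,2}(t)\cdot\Gamma^\star_{n,2}(t)$ and $\psi_n/\beta_{n,2}(t)\to 0$ uniformly on $[0,T]$ by Lemma \ref{lem:exp}; combined with $\sup_n\|\Gamma^\star_{n,2}\|_\infty = O_{\mathbb P}(1)$ from tightness, this gives the required uniform control. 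A minor point to be careful about is that the statement writes $G_2\subset G_1$ in one place and $G_1\subset G_2$ in another — I would use the inclusion consistent with "$G_2$ is the denominator / larger-count graph'' (as in triangles $\subset$ wedges) so that $F(G_2)$ is the relevant normalizing quantity and the expectation asymptotics of Lemma \ref{lem:exp} apply in the stated direction; this does not affect the structure of the argument.
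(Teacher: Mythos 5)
Your proposal is correct and follows essentially the same route as the paper: joint convergence of $(\Gamma^\star_{n,1},\Gamma^\star_{n,2})$ from Theorem \ref{main}, a delta-method argument for the ratio $h(x_1,x_2)=a_1x_1/(a_2x_2)$ using the mean and $F_n$ asymptotics from Lemmas \ref{lem:exp} and \ref{pF_asmpt}, identification of the limiting covariance with $\Sigma^C$, and the dense-case degeneration via Remark \ref{rem_corr} — all exactly as in the paper's argument. The only difference is in execution: the paper invokes the functional delta method around the limiting vector $\vk v=(F(G_1)/a_1,F(G_2)/a_2)^{\top}$, whereas you linearize the ratio by hand around the exact finite-$n$ means $\beta_{n,i}(t)=\E[\Gamma_{n,i}(t)]$ and absorb the random denominator by a uniform law-of-large-numbers/Slutsky step; this has the minor advantage that your centering matches the statement of the proposition verbatim, so you never need to control $\zeta_n$ times the discrepancy between $\tfrac{a_1}{a_2}\E[\Gamma_{n,1}(t)]/\E[\Gamma_{n,2}(t)]$ and $F(G_1)/F(G_2)$, a replacement that the paper's centering at $h(\vk v)$ implicitly requires. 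Your treatment of the denominator (uniform control of $\Gamma_{n,2}/\beta_{n,2}$ from tightness of $\Gamma^\star_{n,2}$ and $\psi_n/\beta_{n,2}\to 0$) plays the same role as the paper's observation that the event of a vanishing denominator has asymptotically negligible probability, and your note on the $G_1\subset G_2$ versus $G_2\subset G_1$ inconsistency is a genuine typo in the paper that does not affect the argument.
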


\begin{example}[Clustering coefficient process] {\em Consider a particular example of the clustering coefficient for $G_1$ being a triangle and $G_2$ being a wedge. 
Denoting
\begin{align*}
    \kappa_d := \int_{\R^d}\vp(\|{y}\|^d)\,\td y,\quad\tau_d := \int_{\left(\R^d\right)^2}\,\vp(\|{y_1}\|^d)\,\vp(\|{y_2}\|^d)\,\vp(\|y_1-y_2\|^d)\,\td (y_1,y_2),
\end{align*}
we have that $q = 3$, $a_1= 6$, $a_2 = 2$. With the compact notation ${\mathsf f}:=F({\bs H})$,
\begin{align*}
    F(G_1)& =\, \kappa_d^2,\quad F(G_2) = \tau_d,\quad \abs{{\tt G}\HMAX _{11}} =  \abs{{\tt G}\HMIN _{11}} = 9 , \quad \abs{{\tt G}\HMAX _{12}} = \abs{{\tt G}\HMIN _{12}} = 3 ,  \quad \abs{{\tt G}\HMAX _{22}} = \abs{{\tt G}\HMIN _{22}} = 1,
\end{align*}    
\begin{align*}
\mathsf{f} = 
\begin{cases}
\kappa_d^2 \tau_d & \text{for } \boldsymbol{H} \in {\tt G}\HMAX _{12} \\
\tau_d            & \text{for } \boldsymbol{H} \in {\tt G}\HMIN _{12}
\end{cases}
\quad
\mathsf{f} = 
\begin{cases}
\tau_d^2 & \text{for } \boldsymbol{H} \in {\tt G}\HMAX _{22} \\
\tau_d   & \text{for } \boldsymbol{H} \in {\tt G}\HMIN _{22}
\end{cases}
\quad
\mathsf{f} = 
\begin{cases}
\kappa_d^4 & \text{for } \boldsymbol{H} \in {\tt G}\HMAX _{11} \\
\kappa_d^2 & \text{for } \boldsymbol{H} \in {\tt G}\HMIN _{11},~ H_1 = H_2 \\
\tau_d     & \text{for } \boldsymbol{H} \in {\tt G}\HMIN _{11},~ H_1 \ne H_2
\end{cases}
\end{align*}
so that
\[
{\mathbb F}_{11}\HMAX = \frac{9\kappa_d^4}{4}, \quad 
{\mathbb F}_{11}\HMIN = \frac{3\kappa_d^2 + 6\tau_d}{6}, \quad
{\mathbb F}_{12}\HMAX = \frac{3\kappa_d^2\tau_d}{4}, \quad 
{\mathbb F}_{12}\HMIN = \frac{3\tau_d}{6}, \quad
{\mathbb F}_{22}\HMAX = \frac{\tau_d^2}{4}, \quad 
{\mathbb F}_{22}\HMIN = \frac{\tau_d}{6}.
\]
After elementary computations we find for $\nu_n \in {\mathscr D}$ that $\Sigma^C(s,t) =0$, and for $\nu_n \in {\mathscr S}$ that
\begin{align*}
    \Sigma^C(s,t) 
    &= 
    9\left(Z(\abs{t-s})\right)^3\,\Bigl(\frac{36}{\tau_d} - \frac{90\kappa^2_d}{\tau^2_d} + \frac{54\kappa^4_d}{\tau^3_d}\Bigr).& 
\end{align*}}
\end{example}

\section{Preliminaries}\label{sec:prelim}
\subsection{Cumulants}
We recall that, with $\mathrm{\bf i}$ the imaginary unit, we define the {\it cumulants} by
$$
\cum {(X_1,\,\dots,\,X_m)} := (-\mathrm {\bf i})^m \frac{\partial^m}{\partial t_1\cdots \partial t_m}\log \phi_{X_1, \dots, X_m}(t_1, \dots, t_m)\mid_{t_1 = \ldots= t_m = 0},
$$
where $\phi_{X_1, \dots, X_m}(t_1, \dots, t_m)$ is the characteristic function of the random vector $(X_1, \dots, X_m)$:
$$
\phi_{X_1, \dots, X_m}(t_1, \dots, t_m):=  \E\Big[ \exp\Big( \mathrm {\bf i}\sum_{i\le m} t_iX_i \Big) \Big] .
$$ 
Moreover, we denote the $m$-th order cumulant of a random variable $X$ by $\cum_m(X)$ and note that $\cum(X,\dots,X)=\cum_m(X)$, where the left-hand side takes an $m$-tuple as its input.

\subsection{Partitions and diagram formulae}
{Partitions} play a central role in our application of the method of cumulants. To this end, in the present subsection we reproduce some material from \cite[Section 2.4]{chaos} (see also \cite[Chapter 12.2]{lp}).

Let $m\in\N$ and let $q_1,\hdots,q_m\in\N$. We define $N_0:=0$, $\smash{N_\ell{:=}\sum_{i=1}^\ell q_i}$, $\ell\in\{1,\hdots,m\}$, and $N:=N_m$, and put $J_\ell:=\{N_{\ell-1}+1,\hdots,N_\ell\}$, $\ell\in\{1,\hdots,m\}$. 
A partition $\sigma$ of $[N]:=\{1,\hdots, N\}$ is a collection $\{B_1,\hdots,B_k\}$ of {\color{green} } $1\leq k\leq N$ {pairwise} disjoint non-empty sets, called {\it blocks}, such that $B_1\cup\ldots\cup B_k=[N]$. The number $k$ of blocks of $\sigma$ is denoted by $|\sigma|$. By $\Pi(q_1,\hdots,q_m)$ we denote the set of all partitions $\sigma$ such that $|B\cap J_\ell|\leq 1$ for all $\ell\in\{1,\hdots,m\}$ and $B\in\sigma$. For a graphical representation of partitions, see \cite[Figure 1]{chaos}.

Every partition $\sigma\in\Pi(q_1,\hdots,q_m)$ induces a {\it partition} $\sigma^*$ of $\{1,\hdots,m\}$ in the following way: $i,j\in\{1,\hdots,m\}$ are in the same block of $\sigma^*$ whenever there is a block $B\in\sigma$ such that $|B\cap J_i|=1$ and $|B\cap J_j|=1$. Let $\smash{\widetilde{\Pi}(q_1,\hdots,q_m)}$ be the set of all partitions $\sigma\in\Pi(q_1,\hdots,q_m)$ such that $|\sigma^*|=1$. By $\Pi_{\geq 2}(q_1,\hdots,q_m)$ and
$\smash{\widetilde{\Pi}_{\geq 2}(q_1,\hdots,q_m)}$ we denote the sets of all $\sigma\in\Pi(q_1,\hdots,q_m)$ and of all $\sigma\in\widetilde{\Pi}(q_1,\hdots,q_m)$ such that $|B|\geq 2$ for all $B\in\sigma$. Finally, we introduce the set $\overline{\Pi}(q_1,\ldots,q_m)$ of all partitions $\sigma\in\Pi(q_1,\ldots,q_m)$ such that for each $\ell\in\{1,\hdots,m\}$ there exists a block $B\in\sigma$ with $|B|\geq 2$ and $B\cap J_\ell \neq\varnothing$. In other words, in each row in the graphical representation of $\sigma$ there exists at least one element, which belongs to some block $B\in\sigma$ with $|B|\geq 2$. In case that $q_1=\hdots=q_m= {q}$ we simply write $\Pi^m(q)$, $\widetilde{\Pi}^m(q)$, $\widetilde{\Pi}^m_{\geq 2}(q)$ and $\overline{\Pi}^m(q)$ instead of $\Pi(q_1,\hdots,q_m)$, $\widetilde{\Pi}(q_1,\hdots,q_m)$,  $\widetilde{\Pi}_{\geq 2}(q_1,\hdots,q_m)$ and $\overline{\Pi}(q_1,\hdots,q_m)$, respectively.

For functions $f^{(\ell)}: \mathbb X^{q_\ell}\to\R$, $\ell\in\{1,\hdots,m\}$, we define their {\it tensor product} $\otimes_{\ell=1}^m f^{(\ell)}: \mathbb X^N\to\R$ by
$$
(\otimes_{\ell=1}^m f^{(\ell)})(x_1,\hdots,x_N):=\prod_{\ell=1}^m f^{(\ell)}(x_{N_{\ell-1}+1},\hdots,x_{N_\ell})\,.
$$
For $\sigma\in\Pi(q_1,\hdots,q_m)$ the function $(\otimes_{\ell=1}^m f^{(\ell)})_\sigma: \mathbb X^{|\sigma|}\to\R$ is obtained by replacing in $(\otimes_{\ell=1}^m f^{(\ell)})$ all variables that belong to the same block of $\sigma$ by a new common variable. Note that this way $(\otimes_{\ell=1}^m f^{(\ell)})_\sigma$ is only defined up to permutations of its arguments. Since in what follows we always integrate with respect to all arguments of $(\otimes_{\ell=1}^m f^{(\ell)})_\sigma$, this does not cause any problems. 

The following diagram formulae from \cite{chaos} will prove useful in our arguments. Let $m \in \mathbb N$, let $\tilde m$ with $\tilde m \ge m$  be even and let $f^{(\ell)} \in L_s^1(\mu^{q_{\ell}})$ with $q_{\ell} \in \mathbb N$, $\ell \in [\tilde m]$, be such that 
\begin{align*}
    \int_{\mathbb X^{|\sigma|}}|(\otimes_{\ell=1}^{\widetilde m} f^{(\ell)})_{\sigma}| \,\d \mu^{|\sigma|}<\infty,\qquad &\sigma \in \overline{\Pi}_{\ge 2}^{\tilde m}(q_i),\quad i \in [m],\\
    \int_{\mathbb X^{|\sigma|}}|(\otimes_{\ell=1}^{m} f^{(\ell)})_{\sigma}| \,\d \mu^{|\sigma|}<\infty,\qquad &\sigma \in \widetilde{\Pi}_{\ge 2}(q_1,\dots,q_m).
\end{align*}
Then, \cite[Theorem 3.7]{chaos} gives for $S_{\ell}:=\sum_{(p_1,\dots,p_{q_{\ell}})\in \eta_{\neq}^{q_{\ell}}} f^{(\ell)}(p_1,\dots,p_{q_{\ell}})$, $\ell \in [m]$,
\begin{align}
    \E\Big[\prod_{\ell=1}^m(S_{\ell}-\E S_{\ell})\Big] &= \sum_{\sigma \in \overline{\Pi}(q_1,\dots,q_m)}\int_{\mathbb X^{|\sigma|}}(\otimes_{\ell=1}^{m} f^{(\ell)})_{\sigma}\,\d \mu^{|\sigma|},\label{eq:mom}\\
    \text{cum}(S_1,\dots,S_m)&=\sum_{\sigma \in \widetilde{\Pi}(q_1,\dots,q_m)} \int_{\mathbb X^{|\sigma|}}(\otimes_{\ell=1}^{m} f^{(\ell)})_{\sigma}\,\d \mu^{|\sigma|}.\label{eq:cum}
\end{align}

\subsection{Two auxiliary results}
 Let $H$ be a connected graph on vertices $1,\dots,q$ with edge set $E(H)$.
The following lemma gives the asymptotic behavior of
\begin{align}
   F_n(H) := \int_{([-\frac 12,\frac 12]^d)^{q}}\prod_{(u,v)\in E(H)}\varphi_n(x_u-x_v)\td\vk x.\label{F_def}
\end{align}

\begin{lemma}\label{pF_asmpt} 
    Let $H$ be a connected graph on vertices $1,\dots,q$ with edge set $E(H)$. Then we have as $n \to \infty$,
    $$
    F_n(H) \sim \nu_n^{q-1} \int_{(\R^d)^{q-1}} \prod_{(u,v) \in E(H)} \vp(\|y_u-y_v\|^d) \d(y_2,\dots,y_q)=:\nu_n^{q-1} F(H),  
    $$
    where $y_1:=0$.
\end{lemma}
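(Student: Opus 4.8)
\textbf{Proof plan for Lemma \ref{pF_asmpt}.}

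The plan is to reduce the torus integral to a Euclidean integral by a change of variables that peels off one integration (the overall translation) and rescales the remaining $q-1$ variables by a factor $\nu_n^{1/d}$, and then to pass to the limit by dominated convergence. First I would fix a spanning tree $\mathcal T \subseteq E(H)$ of the connected graph $H$ and use it to organise the integration: translate so that $x_1 = 0$ is integrated out trivially (this uses the torus structure, so that $\int_{[-1/2,1/2]^d}\mathrm dx_1 = 1$, and produces no boundary terms), leaving
\[
F_n(H) = \int_{([-\frac12,\frac12]^d)^{q-1}} \prod_{(u,v)\in E(H)} \vp\!\left(\frac{\|x_u-x_v\|_{\mathbb T}^d}{\nu_n}\right)\,\mathrm d(x_2,\dots,x_q),
\]
where $\|\cdot\|_{\mathbb T}$ is the torus norm and we set $x_1 := 0$. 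Next, substitute $x_j = \nu_n^{1/d} y_j$ for $j = 2,\dots,q$; the Jacobian contributes exactly $\nu_n^{(q-1)}$ (i.e. $(\nu_n^{1/d})^{d(q-1)}$), which is the prefactor in the claim. After rescaling, the domain for each $y_j$ becomes $\nu_n^{-1/d}[-\frac12,\frac12]^d$, which expands to all of $\R^d$ as $n \to \infty$, and on compact sets the torus norm $\|x_u - x_v\|_{\mathbb T}$ agrees with the Euclidean norm $\|\nu_n^{1/d}(y_u - y_v)\|$ once $\nu_n$ is small, so the integrand converges pointwise to $\prod_{(u,v)\in E(H)} \vp(\|y_u - y_v\|^d)$.

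For the passage to the limit I would apply dominated convergence. The natural dominating function comes from keeping only the spanning-tree edges: since $\vp \le 1$, for every $n$ the rescaled integrand is bounded by $\prod_{(u,v)\in\mathcal T} \vp(\|y_u-y_v\|^d)$ (extending $\vp$ by, say, its monotone limit so that the torus/Euclidean discrepancy is harmless — more carefully, one bounds the torus norm below by a constant multiple of the Euclidean norm on the relevant range and uses monotonicity of $\vp$). Integrating this tree-product over $(\R^d)^{q-1}$ factorises along the tree into $q-1$ copies of $\int_{\R^d}\vp(\|y\|^d)\,\mathrm dy$, which equals a dimensional constant times $\int_0^\infty \vp(t)\,\mathrm dt = 1$ by the normalisation assumption; in particular it is finite, so dominated convergence applies and yields $F_n(H)/\nu_n^{q-1} \to F(H)$, with $F(H)$ finite by the same tree bound. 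This also shows $F(H) \in (0,\infty)$.

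The main obstacle is the bookkeeping around the torus metric: one must check that replacing $\|\cdot\|_{\mathbb T}$ by $\|\cdot\|$ is legitimate both for the pointwise limit and for constructing an $n$-uniform integrable dominating function, since for small $\nu_n$ the relevant Euclidean distances $\nu_n^{1/d}\|y_u - y_v\|$ can still be of order one when the $y$'s are large. The clean way around this is to note $\|x\|_{\mathbb T} \ge c\|x\|$ for $x$ in a fixed neighbourhood of $0$ and, for $x$ outside that neighbourhood, $\vp(\|x\|_{\mathbb T}^d/\nu_n) \le \vp(c'/\nu_n) \to 0$; splitting the integral accordingly and using monotonicity of $\vp$ makes the domination and the limit rigorous without any delicate estimate. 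Everything else is a routine change of variables plus the assumed normalisation of $\vp$.
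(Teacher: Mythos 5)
Your proposal is correct and follows essentially the same route as the paper: fix the first vertex by translation invariance, rescale the remaining $q-1$ variables by $\nu_n^{1/d}$ (producing the Jacobian factor $\nu_n^{q-1}$), and pass to the limit by dominated convergence. The spanning-tree dominating function and the torus-versus-Euclidean bookkeeping you spell out are precisely the details the paper leaves implicit in its one-line appeal to dominated convergence.
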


\begin{proof}
    By definition of $\vp_n$, we have
    $$
    F_n(H)= \int_{W^{q}}\prod_{(u,v)\in E(H)}\vp\Big(\frac{\|x_u-x_v\|^d}{\nu_n}\Big)\td\vk x
    $$
    Substituting $x_i:=x_1+\nu_n^{1/d}y_i,\,i =2,\dots,q$, it follows by dominated convergence that
    \begin{align*}
        F_n(H)&= \nu_n^{q-1}  \int_{(\R^d)^{q-1}} \one\Big\{x_1+\nu_n^{1/d} y_i \in \big[-\tfrac 12, \tfrac 12\big]^d\Big\} \prod_{(u,v)\in E(H)}\vp(\|y_u-y_v\|^d)\td (y_2,\dots,y_q)\\
        &\sim \nu_n^{q-1} \int_{(\R^d)^{q-1}}  \prod_{(u,v)\in E(H)}\vp(\|y_u-y_v\|^d)\td (y_2,\dots,y_q),
    \end{align*}
    where $y_1:=0$.
\end{proof}

We now state a technical lemma regarding the process $(A(t))_t$ governing the status (active/inactive) of a Poisson point. 
We leave out the proof, as it is analogous to that of \cite[Proposition 2.6]{hkm25}.

\begin{lemma}\label{edge_process_lem} There is a positive constant $\mathcal{C}$ such that for any $0\leqslant r\leqslant s\leqslant t\leqslant T$,
    \begin{align}
        \E\bigl[A(t)\bigr]& = \varrho,\label{a_exp}\\
        \E\bigl[A(s)A(t)\bigr] &= \varrho^2 Z(|{t-s}|),\label{a_cov}\\
        \mathbb{P}(A(r)\not=A(s))&\leqslant \mathcal{C}\abs{s-r},\label{a_single}\\ 
        \mathbb{P}(A(r)\not=A(s),\,A(s)\not=A(t))&\leqslant \mathcal{C}\abs{t-r}^2.\label{a_double}
    \end{align}
\end{lemma}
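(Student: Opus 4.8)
The plan is to derive all four statements by explicitly solving the Kolmogorov equations for the two-state Markov jump process $(A(t))_t$. Its generator $Q$ has rows $(-\mu,\mu)$ and $(\lambda,-\lambda)$ in the state ordering $(0,1)$; the eigenvalues of $Q$ are $0$ and $-(\lambda+\mu)$, the stationary law is $\pi=(1-\varrho,\varrho)$ with $\varrho=\mu/(\mu+\lambda)$, and the transition matrix $P_t:=e^{tQ}$ equals $\one\,\pi+e^{-(\lambda+\mu)t}\,(I-\one\,\pi)$. In particular $P_t(1,1)=\varrho+(1-\varrho)e^{-(\lambda+\mu)t}$, $P_t(0,1)=\varrho\,(1-e^{-(\lambda+\mu)t})$, and $P_t(a,1-a)\le 1-e^{-(\lambda+\mu)t}$ for $a\in\{0,1\}$. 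Since $(A(t))_t$ starts from $\pi$, \eqref{a_exp} is immediate, and for \eqref{a_cov} we condition on $A(s)$ (using $s\le t$) to get $\E[A(s)A(t)]=\varrho\,P_{t-s}(1,1)=\varrho\bigl(\varrho+(1-\varrho)e^{-(\lambda+\mu)(t-s)}\bigr)$; the algebraic identity $\varrho(1-\varrho)=\varrho^2\lambda/\mu$ rewrites the right-hand side as $\varrho^2\bigl(1+(\lambda/\mu)e^{-(\lambda+\mu)(t-s)}\bigr)=\varrho^2 Z(|t-s|)$.

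For \eqref{a_single}, conditioning on $A(r)$ and adding the two mismatch probabilities gives $\P(A(r)\ne A(s))=2\varrho(1-\varrho)\bigl(1-e^{-(\lambda+\mu)(s-r)}\bigr)$, so the elementary inequality $1-e^{-x}\le x$ for $x\ge 0$ yields \eqref{a_single} with $\mathcal C=2\varrho(1-\varrho)(\lambda+\mu)$. For \eqref{a_double}, I would condition on the value of $A(s)$ at the middle time and use two structural facts about the stationary chain: (i) by the Markov property, $A(r)$ and $A(t)$ are conditionally independent given $A(s)$ (since $r\le s\le t$); and (ii) by reversibility (detailed balance $\pi_a P_u(a,b)=\pi_b P_u(b,a)$, which holds trivially for a two-state chain), $\P(A(r)=b\mid A(s)=a)=P_{s-r}(a,b)$. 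Hence
\begin{align*}
\P\bigl(A(r)\ne A(s),\,A(s)\ne A(t)\bigr)=\sum_{a\in\{0,1\}}\pi_a\,P_{s-r}(a,1-a)\,P_{t-s}(a,1-a)\le(\lambda+\mu)^2(t-r)^2,
\end{align*}
using $P_u(a,1-a)\le(\lambda+\mu)u$ together with $s-r\le t-r$ and $t-s\le t-r$. Taking $\mathcal C$ to be the maximum of $1$ and the constants appearing in \eqref{a_single}–\eqref{a_double} completes the proof.

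There is no real obstacle: everything reduces to the explicit solution of a $2\times2$ linear system and the bound $1-e^{-x}\le x$. The only step needing a little care is the conditioning in \eqref{a_double}, where one conditions on the intermediate time $s$ and splits off the independent ``past'' $A(r)$ and ``future'' $A(t)$; this forces the appeal to reversibility so that conditioning backwards onto $A(s)$ still produces the kernel $P_{s-r}$. This parallels the argument for \cite[Proposition 2.6]{hkm25}.
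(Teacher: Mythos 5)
Your proof is correct and complete. Note that the paper itself does not prove Lemma \ref{edge_process_lem}: it omits the argument, stating it is analogous to \cite[Proposition 2.6]{hkm25}, so there is no in-paper proof to compare against; your explicit-solution route, via $P_t=\one\,\pi+e^{-(\lambda+\mu)t}(I-\one\,\pi)$ together with the identity $\varrho(1-\varrho)=\varrho^2\lambda/\mu$ and the bound $1-e^{-x}\le x$, is a perfectly valid self-contained substitute, and all four displays check out (in particular $\pi_1 P_{t-s}(1,1)=\varrho^2 Z(|t-s|)$ as required). Two small remarks. First, the appeal to reversibility in \eqref{a_double}, while correct (detailed balance is immediate for a two-state stationary chain), is not needed: writing the joint law forward in time, $\P(A(r)=x,\,A(s)=y,\,A(t)=z)=\pi_x\,P_{s-r}(x,y)\,P_{t-s}(y,z)$, the event in \eqref{a_double} contributes only the terms $\sum_{y}\pi_{1-y}\,P_{s-r}(1-y,y)\,P_{t-s}(y,1-y)\le(\lambda+\mu)^2(s-r)(t-s)\le(\lambda+\mu)^2(t-r)^2$, which avoids the backward-conditioning step altogether. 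Second, an even shorter argument for \eqref{a_single} and \eqref{a_double} (and presumably the one behind the cited proposition) observes that $\{A(r)\neq A(s)\}$ forces at least one jump of the status process in $[r,s]$, and the intersected event in \eqref{a_double} forces at least two jumps in $[r,t]$; dominating the jump times by a Poisson process of rate $\lambda+\mu$ gives the bounds $\mathcal C|s-r|$ and $\mathcal C|t-r|^2$ directly. Either way, your constants and conclusions are fine.
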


\subsection{Parameter specifications}
In the following arguments it will be useful to consider the $\Gamma_{n,i}$, with $i\in [m]$, as $U$-statistics. To this end, we formally enlarge our state space to\[\mathbb X:=W \times \mathbb D([0,T],\{0,1\}) \times [0,1]^{\mathbb N}.\] On $\mathbb X$ we choose the Borel measure $\mu_n:=n\, \d x \otimes \mathbb Q \otimes (\text{Unif}[0,1])^{\mathbb N}$, where $\mathbb Q$ denotes the distribution of a Markov jump process on $\{0,1\}$ with exponentially distributed waiting times with parameters $\mu$ and $\lambda$ (as described in Section 2). We write $(X_k,A_k(\cdot),(T_{k,\ell})_{\ell \ge 1})$ for a canonical random element in $\mathbb X$. Here, $A_k(t)=1$ if the point $X_k$ is active at time $t \in [0,T]$. The set of active points forms the vertex set of our RCM, where we place an edge between $X_k$ and $X_{\ell}$ if and only if
\begin{align*}
    \begin{cases}
        T_{k,\ell} \le \vp_n(X_k-X_{\ell}), \quad \text{if } X_k \prec X_{\ell},\\
        T_{\ell,k} \le \vp_n(X_k-X_{\ell}), \quad \text{if } X_{\ell} \prec X_k.
    \end{cases}
\end{align*}
where $x \prec y$ if $x$ is smaller than $y$ with respect to the lexicographic order on $\mathbb R^d$.
Let $P_{k}:=(X_{k},A_{k}(\cdot),(T_{k,\ell})_{\ell \ge 1})$ for $k \in [q_i]$ and
\begin{align}\label{def:f}
f_{n,t}^{(i)}(P_1,\dots,P_{q_i}):=\frac{1}{q_i!} \prod_{k=1}^{q_i} A_{k}(t) \cdot \sum_{H \in {\bs G}_{G_i}([q_i])} \prod_{\substack{\{k,\ell\} \in E(H)\\X_k \prec X_{\ell}}}\one\{T_{k,\ell} \le \vp_n(X_k-X_{\ell})\},
\end{align}
where ${\bs G}_{G_i}([q_i])$ is the set of all graphs on nodes $[q_i]$ that are isomorphic to $G_i$. Then $$\Gamma_{n,i}(t)=\sum_{(P_1,\dots,P_{q_i}) \in \eta_{n,\neq}^{q_i}} f_{n,t}^{(i)}(P_1,\dots,P_{q_i}).$$

\section{Expectation and covariance structure}\label{section:exp_and_cov}
In this section we subsequently consider the means and covariances pertaining to $\Gamma_{n,i}(\cdot)$.  

\begin{lemma}\label{lem:exp} 
    For any $n\in\N$, $i\in [m]$ and $t\in[0,T]$ with $a_i:=|{\rm Aut}(G_i)|$,
    \begin{align}
        \E\bigl[\Gamma_{n,i}(t)\bigr] = \frac{F_n(G_i)(\varrho n)^{q_i}}{a_i}.\label{exp}
    \end{align}
\end{lemma}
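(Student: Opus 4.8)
The plan is to compute the expectation directly from the $U$-statistic representation $\Gamma_{n,i}(t)=\sum_{(P_1,\dots,P_{q_i})\in\eta_{n,\neq}^{q_i}}f_{n,t}^{(i)}(P_1,\dots,P_{q_i})$ established at the end of Section~\ref{sec:prelim}, and then apply the multivariate Mecke equation for Poisson point processes. Recall that the Mecke equation gives $\E\bigl[\sum_{(P_1,\dots,P_{q_i})\in\eta_{n,\neq}^{q_i}}g(P_1,\dots,P_{q_i})\bigr]=\int_{\mathbb X^{q_i}}g(p_1,\dots,p_{q_i})\,\d\mu_n^{q_i}(p_1,\dots,p_{q_i})$ for non-negative measurable $g$, where $\mu_n = n\,\d x\otimes\mathbb Q\otimes(\mathrm{Unif}[0,1])^{\mathbb N}$. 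So the first step is simply to write $\E[\Gamma_{n,i}(t)]=\int_{\mathbb X^{q_i}}f_{n,t}^{(i)}\,\d\mu_n^{q_i}$.

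Next I would carry out the integration factor by factor, exploiting the product structure of both $f_{n,t}^{(i)}$ and $\mu_n$. The function $f_{n,t}^{(i)}$ splits as a prefactor $1/q_i!$, a product $\prod_{k=1}^{q_i}A_k(t)$ depending only on the status coordinates, and a sum over $H\in{\bs G}_{G_i}([q_i])$ of products of indicators $\one\{T_{k,\ell}\le\vp_n(X_k-X_\ell)\}$ depending on the location coordinates and the $[0,1]^{\mathbb N}$-coordinates. Integrating out the uniform coordinates: for each edge $\{k,\ell\}\in E(H)$ the relevant $T$-variable is an independent $\mathrm{Unif}[0,1]$, so $\int\one\{T_{k,\ell}\le\vp_n(X_k-X_\ell)\}\,\d(\mathrm{Unif}[0,1])=\vp_n(X_k-X_\ell)$, and since the profile is symmetric this does not depend on the lexicographic ordering. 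Integrating out the status processes: by independence across points and \eqref{a_exp} in Lemma~\ref{edge_process_lem}, $\int\prod_{k=1}^{q_i}A_k(t)\,\d\mathbb Q^{q_i}=\varrho^{q_i}$, which crucially is independent of $t$. What remains is $\frac{1}{q_i!}\sum_{H\in{\bs G}_{G_i}([q_i])}\int_{W^{q_i}}\prod_{\{k,\ell\}\in E(H)}\vp_n(x_k-x_\ell)\,n^{q_i}\d\vk x$. Since every $H\in{\bs G}_{G_i}([q_i])$ is isomorphic to $G_i$, each term equals $F_n(G_i)$ as defined in \eqref{F_def}, and the number of such graphs is $q_i!/a_i$ (the orbit-stabilizer count for labelled copies of $G_i$ on $[q_i]$). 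Combining: $\E[\Gamma_{n,i}(t)]=\frac{1}{q_i!}\cdot\frac{q_i!}{a_i}\cdot F_n(G_i)\cdot(\varrho n)^{q_i}=\frac{F_n(G_i)(\varrho n)^{q_i}}{a_i}$, as claimed.

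The only genuinely delicate point is the combinatorial bookkeeping: one must check that $|{\bs G}_{G_i}([q_i])|=q_i!/a_i$ and that the factor $1/q_i!$ in \eqref{def:f} exactly cancels the number of labelled copies, so that the definition of $f_{n,t}^{(i)}$ via the symmetrized sum is consistent with the original $\frac{1}{a_i}$-normalized definition of $\Gamma_{n,i}(t)$; this is really just an orbit-stabilizer argument. The remaining steps — applying Mecke, factoring the integrals over the three coordinate groups, and invoking $\E[A(t)]=\varrho$ — are routine. I do not expect any obstacle from the location integral, since I only need to identify it with $F_n(G_i)$ and no asymptotics are required at this stage (the asymptotic behaviour of $F_n$ is the content of Lemma~\ref{pF_asmpt}, used elsewhere). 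One small thing worth a sentence of care is finiteness/measurability so that Mecke applies: since $f_{n,t}^{(i)}$ is non-negative and bounded by a product of indicators, and $\vp_n$ is integrable over $W$ (it is bounded and $W$ has finite measure), all integrals are finite, so Mecke and Fubini are legitimate.
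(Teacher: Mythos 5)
Your proposal is correct and follows essentially the same route as the paper: the paper also applies the multivariate Mecke/Campbell formula to the $U$-statistic representation with $f_{n,t}^{(i)}$, integrates out the marks so that each edge contributes $\vp_n$, uses $\E[A_\ell(t)]=\varrho$ and independence to extract $\varrho^{q_i}$, and cancels the $1/q_i!$ against $|{\bs G}_{G_i}([q_i])|=q_i!/a_i$. The only cosmetic difference is that the paper phrases the mark integration implicitly inside Campbell's formula rather than spelling out the coordinate-by-coordinate factorization as you do.
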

\begin{proof}
Let $f_{n,t}^{(i)}$ be as in \eqref{def:f}. Then Campbell's formula gives 
\begin{align*}
\E[\Gamma_{n,i}(t)]&= \int_{\mathbb X^{q_i}} f_{n,t}^{(i)}(p_1,\dots,p_{q_i}) \mu_n(\d (p_1,\dots,p_{q_i}))\\
&= \frac{n^{q_i}}{q_i!} \prod_{\ell \in [q_i]} \E[A_{\ell}(t)] \cdot\sum_{H \in {\bs G}_{G_i}([q_i])}  \int_{W^{q_i}}  \prod_{(k,\ell) \in E(H)} \vp_n(x_{\ell}-x_k) \d(x_1,\dots,x_{q_i}).
\end{align*}
Note that $|{\bs  G}_{G_i}([q_i])|=q_i!/|\text{Aut}(G_i)|$ and that the integral above only depends on $G_i$, but not on $H$. Since $\E[A_{\ell}(t)]=\varrho$ for all $\ell \in [q_i]$, we arrive at
$$
\frac{n^{q_i}}{|\text{Aut}(G_i)|} \varrho^{q_i}  \int_{W^{q_i}}  \prod_{(k,\ell) \in E(G_i)} \vp_n(x_{\ell}-x_k) \d(x_1,\dots,x_{q_i}),
$$
which is the assertion.
\end{proof}

\begin{lemma} \label{lem:cov}   
For any $s,t\in[0,T]$, and $i,j\in [m]$, with $F({\bs H})$ as defined in \eqref{eq:defF2},
\begin{align*}
{\rm Cov}(\Gamma_{n,i}(t),\Gamma_{n,j}(s)) \sim  \sum_{m =1} ^{q_i \wedge q_j} \sum_{H_1,H_2}  \frac{n^{q_i+q_j-m}\varrho^{q_i+q_j}Z(|t-s|)^{m}}{m! (q_i-m)! (q_j-m)!}\nu_n^{q_i+q_j-m-1} F({\bs H}),
\end{align*}
where the sum is over all graphs $H_1$ on nodes $1,\dots,q_i$ that are isomorphic to $G_i$ and all graphs $H_2$ on nodes $1,\dots,m,q_i+1,\dots,q_i+q_j-m$ that are isomorphic to $G_j$.
\end{lemma}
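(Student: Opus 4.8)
The plan is to compute $\mathrm{Cov}(\Gamma_{n,i}(t),\Gamma_{n,j}(s)) = \mathbb{E}[\Gamma_{n,i}(t)\Gamma_{n,j}(s)] - \mathbb{E}[\Gamma_{n,i}(t)]\mathbb{E}[\Gamma_{n,j}(s)]$ directly using the moment formula \eqref{eq:mom} with $m=2$, applied to the $U$-statistic representation $\Gamma_{n,i}(t) = \sum_{{\bs P}\in\eta_{n,\neq}^{q_i}} f_{n,t}^{(i)}({\bs P})$ from \eqref{def:f}. By \eqref{eq:mom}, the covariance equals $\sum_{\sigma\in\overline{\Pi}(q_i,q_j)}\int_{\mathbb X^{|\sigma|}}(f_{n,t}^{(i)}\otimes f_{n,s}^{(j)})_\sigma\,\d\mu_n^{|\sigma|}$. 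Since $\overline{\Pi}(q_i,q_j)$ (as opposed to all of $\Pi(q_i,q_j)$) requires every row to contain an element lying in a block of size $\geq 2$, and here there are only two rows, each such $\sigma$ is determined by a nonempty matching between $J_1=\{1,\dots,q_i\}$ and $J_2=\{q_i+1,\dots,q_i+q_j\}$ of some size $\mathtt{m}\in\{1,\dots,q_i\wedge q_j\}$; the number of ways to choose which $\mathtt{m}$ vertices in each row are matched, and how, contributes the combinatorial factor, while the remaining $q_i-\mathtt{m}$ and $q_j-\mathtt{m}$ vertices stay in singletons.

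First I would, for a fixed $\sigma$ with matching size $\mathtt{m}$, unfold the integral: the $\mu_n^{|\sigma|}$ measure contributes $n^{q_i+q_j-\mathtt{m}}$ (one factor of $n$ per block, and $|\sigma| = q_i+q_j-\mathtt{m}$), the $[0,1]^{\mathbb N}$-coordinates integrate out the indicators $\one\{T_{k,\ell}\le\vp_n(\cdot)\}$ to give a product of $\vp_n$-factors over the edge sets of the two isomorphic copies $H_1$ (of $G_i$) and $H_2$ (of $G_j$), and the $\mathbb D([0,T],\{0,1\})$-coordinates integrate the status processes. The key point on the status side: for each of the $\mathtt{m}$ merged points, the factor $A_k(t)A_k(s)$ appears, contributing $\varrho^2 Z(|t-s|)$ by \eqref{a_cov}, while each of the remaining $q_i+q_j-2\mathtt{m}$ unmerged points contributes $\varrho$ by \eqref{a_exp}; this yields $\varrho^{q_i+q_j}Z(|t-s|)^{\mathtt{m}}$ in total. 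The spatial integral, after identifying the merged vertices, is exactly $F_n({\bs H})$-type and by \nelem{pF_asmpt} is asymptotic to $\nu_n^{q-1}F({\bs H})$ with $q = q_i+q_j-\mathtt{m} = |V(H_1)\cup V(H_2)|$, giving the $\nu_n^{q_i+q_j-\mathtt{m}-1}$ factor. The $\frac{1}{q_i!}\frac{1}{q_j!}$ from the definition of $f^{(i)}_{n,t}$, combined with the number $q_i!/a_i \cdot q_j!/a_j$-type counts of isomorphic graphs and the number of matchings, collapses to the stated $\frac{1}{\mathtt{m}!(q_i-\mathtt{m})!(q_j-\mathtt{m})!}$ after re-indexing so that the matched vertices in $H_2$ are relabeled as $1,\dots,\mathtt{m}$ and its remaining vertices as $q_i+1,\dots,q_i+q_j-\mathtt{m}$; absorbing the automorphism counts is what turns the double sum over all isomorphic pairs $(H_1,H_2)$ into the stated form.

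After assembling all contributions, I would argue that the expectation-product term $\mathbb{E}[\Gamma_{n,i}(t)]\mathbb{E}[\Gamma_{n,j}(s)]$ exactly cancels the $\sigma$-contribution where the matching is empty — but note that the empty matching is excluded from $\overline{\Pi}(q_i,q_j)$ precisely because \eqref{eq:mom} is stated for the centered product, so this cancellation is already built in and no separate bookkeeping is needed. Finally I would verify the asymptotics: the leading order in $n$ and $\nu_n$ is captured by the full sum over $\mathtt{m}$ (no single term dominates in general, since which $\mathtt{m}$ gives the largest power of $n^{q_i+q_j-\mathtt{m}}\nu_n^{q_i+q_j-\mathtt{m}-1} = n^{q_i+q_j-\mathtt{m}}\nu_n^{q_i+q_j-\mathtt{m}-1}$ depends on whether $n\nu_n\to\infty$ or $\to 0$), so the stated $\sim$ with the full sum is the correct statement; one checks that the error terms from \nelem{pF_asmpt} are of strictly smaller order uniformly. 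The main obstacle I anticipate is the careful combinatorial accounting: correctly matching the $1/q_i!$ and $1/q_j!$ normalizations against the number of isomorphic embeddings, the number of size-$\mathtt{m}$ matchings, and the relabeling of vertices, so that the automorphism factors $a_i,a_j$ disappear exactly and the sum reduces to the claimed form over graphs $H_1,H_2$ on the stated vertex sets — this is the step where an off-by-a-factorial error is easiest to make and where I would be most careful to track which sets of graphs are being summed over.
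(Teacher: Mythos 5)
Your proposal is correct, and it reaches the same decomposition (over the number $\mathtt m$ of shared points, with factor $\varrho^{q_i+q_j}Z(|t-s|)^{\mathtt m}$ from Lemma \ref{edge_process_lem}, the spatial asymptotics from Lemma \ref{pF_asmpt}, and the combinatorial weight $\mathtt m!\binom{q_i}{\mathtt m}\binom{q_j}{\mathtt m}/(q_i!q_j!)=1/(\mathtt m!(q_i-\mathtt m)!(q_j-\mathtt m)!)$), but by a different bookkeeping device: you invoke the centered-moment diagram formula \eqref{eq:mom} with two rows, observing that $\overline{\Pi}(q_i,q_j)$ consists exactly of the nonempty partial matchings between the two rows, so the subtraction of $\E[\Gamma_{n,i}(t)]\,\E[\Gamma_{n,j}(s)]$ is automatic. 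The paper instead expands $\E[\Gamma_{n,i}(t)\Gamma_{n,j}(s)]$ by hand, grouping the two tuples by their overlap and applying the multivariate Mecke/Campbell formula, and then identifies the $m=0$ term as the product of expectations; your route buys a built-in cancellation and uniformity with how the paper later treats tightness via \eqref{eq:mom}/\eqref{eq:intotimes}, while the paper's route is more elementary and makes the cancellation explicit, at the cost of the extra counting step. Two small points you should make explicit when writing this up: (i) the application of Lemma \ref{pF_asmpt} needs the union graph $H_1\cup H_2$ to be connected, which holds precisely because $\mathtt m\ge 1$ and both $H_1,H_2$ are connected; and (ii) when the matched vertices carry an edge present in both copies, the corresponding mark indicator is shared and idempotent, so the spatial integrand is the product of $\vp_n$ over the \emph{union} $E(H_1)\cup E(H_2)$ (consistent with \eqref{eq:defF2}), not over the multiset of edges. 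Neither point is a gap in substance, and your caution about the factorial accounting is well placed but resolves exactly as you anticipate.
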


\begin{proof}
Let $f_{n,t}^{(i)},i\in [m],$ be as in \eqref{def:f}. We have
\begin{align*}
\E[\Gamma_{n,i}(t) \Gamma_{n,j}(s)] &= \E\Big[\sum_{(P_1,\dots,P_{q_i})\in \eta_{n,\neq}^{q_i}} \sum_{(P'_1,\dots,P'_{q_j})\in \eta_{n,\neq}^{q_j}} f_{n,t}^{(i)}(P_1,\dots,P_{q_i}) f_{n,s}^{(j)}(P'_1,\dots,P'_{q_j})\Big]\\
&=  \E\Big[\sum_{m=0}^{q_i \wedge q_j} \sum_{(P_1,\dots,P_{q_i})\in \eta_{n,\neq}^{q_i}} \sum_{(P'_1,\dots,P'_{q_j})\in \eta_{n,\neq}^{q_j}} f_{n,t}^{(i)}(P_1,\dots,P_{q_i}) f_{n,s}^{(j)}(P'_1,\dots,P'_{q_j})\\
&\hspace{4cm}\quad \times \one\{|\{P_1,\dots,P_{q_i}\}\cap \{P'_1,\dots,P'_{q_j}\}|=m\}\Big].
\end{align*}
Note that there are $\binom{q_i}{m}$ possibilities to place the $m$ common nodes among $p_1,\dots,p_{q_i}$ and $\binom{q_j}{m}$ possibilities to place them among $p'_1,\dots,p'_{q_j}$. Since there are $m!$ permutations of the $m$ common nodes, the above is given by
\begin{align*}
&\sum_{m=0}^{q_i \wedge q_j} m! \binom{q_i}{m} \binom{q_j}{m}\E\Big[\sum_{{\bs P}_{q_i}\in \eta_{n,\neq}^{q_i}} \sum_{{\bs P}'_{q_j-m}\in (\eta_{n}\setminus \{P_1,\dots,P_{q_i}\})_{\neq}^{q_j-m}} f_{n,t}^{(i)}(\bs P_{q_i}) f_{n,s}^{(j)}(\bs P_m,\bs P'_{q_j-m})\Big]\\
&\, = \sum_{m=0}^{q_i \wedge q_j} m! \binom{q_i}{m} \binom{q_j}{m}\iint\E \Big[f_{n,t}^{(i)}(\bs p_{q_i}) f_{n,s}^{(j)}(\bs p_m,\bs p'_{q_j-m})\Big] \mu_n^{q_j-m}(\d {\bs p}'_{q_j-m}) \mu_n^{q_i}(\d {\bs p}_{q_i}),
\end{align*}
where ${\bs p}_{m}=(p_1,\dots,p_{m})$ and ${\bs p}'_{q_j-m}=(p'_1,\dots,p'_{q_j-m})$ (and analogously for capital and bold letters). For each $m=0,\dots,q_i \wedge q_j$,  the above integral is given by
\begin{align} 
\frac{n^{q_i+q_j-m} \varrho^{q_i+q_j}Z(|t-s|)^{m}}{q_i! q_j!} \sum_{H_1,H_2} \int_{W^{q_i+q_j-m}} \prod_{(k,\ell) \in E(H_1) \cup E(H_2)} \vp_n(x_k-x_{\ell}) \d(x_1,\dots,x_{q_i+q_j-m}),
\end{align} 
where the sum is over all graphs $H_1$ on nodes $1,\dots,q_i$ that are isomorphic to $G_i$ and all graphs $H_2$ on nodes $1,\dots,m,q_i+1,\dots,q_i+q_j-m$ isomorphic to $G_j$. 

Note that the contribution from $m=0$ is precisely $\E[\Gamma_{n,i}(t)] \E[\Gamma_{n,j}(s)]$. This gives for the covariance of $\Gamma_{n,i}(t)$ and $\Gamma_{n,j}(s)$,
$$
\sum_{m=1}^{q_i \wedge q_j}\frac{n^{q_i+q_j-m} \varrho^{q_i+q_j} Z(|t-s|)^{m}}{m! (q_i-m)! (q_j-m)!} \sum_{H_1,H_2} \int_{W^{q_i+q_j-m}} \prod_{(k,\ell) \in E(H_1) \cup E(H_2)} \vp_n(x_k-x_{\ell}) \d(x_1,\dots,x_{q_i+q_j-m}).
$$
Since the graph $G([q_i+q_j-m],E(H_1) \cup E(H_2))$ is connected for all $m=1,\dots,q_i \wedge q_j$, the assertion follows from Lemma \ref{pF_asmpt}.

\end{proof}

\section{Finite-dimensional convergence}\label{section:fdd_convergence}

\begin{proposition}\label{FDD_conv}
Fix $0\le t_1<\cdots <t_k \le T$ for some $k \in \mathbb N$. Then $
(\vk \Gamma^{\star}_n(t_1), \dots,  \vk\Gamma^{\star}_n(t_k))
$
converges to a Gaussian vector as $n \to \infty$.
\end{proposition}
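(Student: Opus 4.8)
The plan is to prove finite-dimensional convergence via the method of cumulants, establishing that for every fixed collection of times $t_1<\cdots<t_k$ and every linear combination $\sum_{i,r} \beta_{i,r}\Gamma^\star_{n,i}(t_r)$, all cumulants of order $\ge 3$ vanish in the limit while the second-order cumulants converge to the prescribed covariance; by the classical cumulant criterion for normal convergence (the limit being determined by its moments, since it is Gaussian), this yields convergence in distribution. Since each $\Gamma_{n,i}(t_r)$ is a Poisson $U$-statistic with kernel $f^{(i)}_{n,t_r}$ as in \eqref{def:f}, the linear combination is again a (signed) $U$-statistic, and the diagram formula \eqref{eq:cum} expresses its $p$-th cumulant as a sum over partitions $\sigma\in\widetilde\Pi(q_{i_1},\dots,q_{i_p})$ of integrals $\int_{\mathbb X^{|\sigma|}}(\otimes_{\ell=1}^p f^{(i_\ell)}_{n,t_{r_\ell}})_\sigma\,\d\mu_n^{|\sigma|}$.

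The core of the argument is a power-counting estimate on each such partition integral. First I would factor out the status-process variables: integrating over the $\mathbb D([0,T],\{0,1\})$-coordinates contributes, for each block $B$ of $\sigma$, a factor $\E[\prod_{r} A(t_r)]$ over the times attached to $B$, which is bounded (by Lemma \ref{edge_process_lem}, it equals $\varrho$ or $\varrho^2 Z(\cdot)$ or similar and is $O(1)$). What remains is a spatial integral of a product of profile functions $\vp_n$ over the connection structure encoded by $\sigma$ together with the edge sets $E(H_1),\dots,E(H_p)$; applying the substitution and dominated-convergence argument of Lemma \ref{pF_asmpt} to each connected component of this "merged" graph, one reads off the exponent of $n$ (from $\mu_n=n\,\d x\otimes\cdots$, so $n^{|\sigma|}$) and the exponent of $\nu_n$ (one factor $\nu_n$ per edge beyond a spanning forest of the merged graph, i.e. $\nu_n^{|\sigma|-c}$ where $c$ is the number of connected components). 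Normalizing by the product of the $\psi_{n,i_\ell}$, one gets a power of $n$ and of $\nu_n$ that is $\le 0$, with equality (giving an $O(1)$, in fact convergent, contribution) only for partitions whose merged graph has the maximal number of connected components compatible with $\widetilde\Pi$ — for $p=2$ these are exactly the partitions isolating the overlap structure that produces $\mathbb F^{\rm \pm}_{ij}$, while for $p\ge 3$ the constraint $|\sigma^*|=1$ forces too few components, so the normalized integral tends to $0$. The dense/sparse dichotomy enters precisely here: in $\mathscr D$ the dominant partitions are those with a single shared vertex per pair (giving $Z(|t-s|)\mathbb F^+_{ij}$), whereas in $\mathscr S$ only the fully-overlapping partitions survive, which requires $q_i=q_j$ and yields $Z(|t-s|)^{q_i}\mathbf 1\{q_i=q_j\}\mathbb F^-_{ij}$.

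Concretely, the steps in order: (i) record that $\sum_{i,r}\beta_{i,r}\Gamma^\star_{n,i}(t_r)$ is a linear combination of Poisson $U$-statistics, hence multilinearity of cumulants reduces everything to mixed cumulants $\cum(\Gamma^\star_{n,i_1}(t_{r_1}),\dots,\Gamma^\star_{n,i_p}(t_{r_p}))$; (ii) apply \eqref{eq:cum} to write each mixed cumulant as a sum over $\sigma\in\widetilde\Pi$; (iii) for each $\sigma$, separate the status-variable integral (bounded via Lemma \ref{edge_process_lem}) from the spatial integral, and estimate the latter by the scaling argument of Lemma \ref{pF_asmpt} applied componentwise to the merged graph; (iv) compare the resulting order $n^{a(\sigma)}\nu_n^{b(\sigma)}$ with the normalization $\prod_\ell\psi_{n,i_\ell}$, checking in both regimes $\mathscr D$ and $\mathscr S$ that the exponent is nonpositive and is zero only for the partitions contributing to the stated covariance when $p=2$; (v) conclude that for $p\ge3$ the normalized mixed cumulants vanish, and that for $p=2$ they converge to $\Sigma_{i_\ell i_{\ell'}}(t_{r_\ell},t_{r_{\ell'}})$ — this last point is essentially Lemma \ref{lem:cov} read off at the level of cumulants. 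The main obstacle is step (iv): carefully verifying, over \emph{all} partitions $\sigma\in\widetilde\Pi$ and uniformly in the two regimes, that the combined $n$- and $\nu_n$-exponents never exceed those of the normalization, and identifying exactly which partitions are critical. This requires a clean bookkeeping lemma relating $|\sigma|$, the number of connected components of the merged graph, and the edge counts $|E(H_1)\cup\cdots\cup E(H_p)|$ — the sparse regime being the delicate case since there the per-edge gain $\nu_n$ is not compensated by $n\nu_n\to\infty$, so the balance is tighter and forces the $q_i=q_j$, full-overlap restriction.
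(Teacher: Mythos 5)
Your proposal is correct and follows essentially the same route as the paper: a Cram\'er--Wold reduction to linear combinations of the Poisson $U$-statistics, the diagram formula \eqref{eq:cum} for the mixed cumulants, a uniform bound on the status-process expectation, the scaling of the spatial integral as in Lemma~\ref{pF_asmpt} (giving $n^{|\sigma|}\nu_n^{|\sigma|-1}$, since $\sigma\in\widetilde{\Pi}$ forces the merged graph to be connected), and regime-wise power counting against the normalization, with the second-order structure supplied by Lemma~\ref{lem:cov}. The only differences are organizational: the paper absorbs the time coefficients into a single kernel per graph type $j$ (so multilinearity is applied only over $[m]^M$) and cites \cite[(6.2)]{heerten} for the spatial bound, rather than re-deriving it componentwise as you sketch.
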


\begin{proof}
 By the Cram\'er-Wold device, it suffices to show that, for any vectors $\vk b_1,\dots, \vk b_h\in \mathbb R_+^m$, the random variable $\smash{S_n:=\sum_{i=1}^h \langle \vk b_i, \vk \Gamma_{n}^{\star}(t_i) \rangle }$ converges to a normally distributed random variable. Our idea is to write $S_n$ as a sum of Poisson $U$-statistics and to apply the diagram formulae from Section \ref{sec:prelim} to show that the cumulants $\mathsf{cum}_M(S_n),\, M \ge 3$, converge to 0 as $n \to \infty$.  We have, with $\vk b_i=(b_{i1},\dots,b_{im}),\,i \in [h],$
 \begin{align*}
     S_n&=\sum_{i=1}^h \langle \vk b_i, \vk \Gamma_{n}^{\star}(t_i) \rangle =\sum_{i=1}^h \sum_{j=1}^m \frac{b_{i,j}}{\psi_{n,j}}(\Gamma_{n,j}(t_i)-\E \Gamma_{n,j}(t_i))\\
     &= \sum_{j=1}^m \frac{1}{\psi_{n,j}}\sum_{i=1}^h b_{i,j}(\Gamma_{n,j}(t_i)-\E \Gamma_{n,j}(t_i)).
 \end{align*}
 For any $j\in [m]$, we let 
 $$
S_n^{(j)}:=\sum_{i=1}^h b_{i,j}\Gamma_{n,j}(t_i),
 $$
 and note that, by the multilinearity of the cumulant,
\begin{align}
 \cum_M(S_n)=\cum(S_n,\dots,S_n)=\sum_{(j_1,\dots,j_M) \in [m]^M}\frac{1}{\psi_{n,j_1}\cdots \psi_{n,j_M}}\cum(S_n^{(j_1)},\dots,S_n^{(j_M)}).\label{eq:cumM}
\end{align}
 For $P_{k}=(X_{k},A_{k}(\cdot),(T_{k,\ell})_{\ell \ge 1})\in W \times \mathbb D([0,T],\{0,1\}) \times [0,1]^{\mathbb N}  ,\,k \in [q_j]$, let 
$$
 g_n^{(j)}(P_1,\dots,P_{q_j}):=\frac{1}{q_j!} \sum_{H \in \mathcal G_{G_j}([q_j])} \prod_{(k,\ell)\in E(H)} \one\{X_k \leftrightarrow X_\ell \text{ in }\textrm{RCM}({\bs X}_{q_j}, \vp_n)\} \sum_{i=1}^h b_{ij} \prod_{\ell=1}^{q_j} A_\ell(t_i),
$$
and note that
$$
S_n^{(j)}= \sum_{(P_1,\dots,P_{q_j})\in \eta_{n,\neq}^{q_j}} g_n^{(j)}(P_1,\dots,P_{q_j}),\quad j=1,\dots,m.
$$
By \eqref{eq:cum}, for all $(j_1,\dots,j_M) \in [m]^M$,
\begin{align}
\text{cum}(S_n^{(j_1)},\dots,S_n^{(j_M)})=\sum_{\sigma \in \widetilde{\Pi}(q_{j_1},\dots,q_{j_M})}\int_{\mathbb X^{|\s|}}  (\otimes_{s=1}^M g_n^{(j_{s})})_\s \d\mu_n^{|\s|}.\label{eq:cum_fidi}
\end{align}
Before estimating \eqref{eq:cum_fidi}, we explain how to define an auxiliary graph on $\{1,\hdots,|\s|\}$ with edge set $E_{\s}$ based on a $\sigma\in\Pi(q_{j_1},\hdots,q_{j_M})$: identify the block $B_k \in \sigma$ with node $k=1,\dots, |\sigma|$ and put an edge $(k,\ell)\in E_{\s}$, if there are $k_i,k_j\in [q_{j_s}]$ for some $s \in [M]$, such that $N_{s-1} +i_k\in B_{k}$, $N_{s-1} + i_{\ell}\in B_{\ell}$ and there is an edge between $i_k$ and $i_{\ell}$ in $G_{j_s}$. For instance, let $m=M=2$, $j_1=1$, $j_2=2$ and $G_1=G_2$ be the graph on vertices $\{1,2,3\}$ with edges $(1,2)$ and $(2,3)$, let $\sigma=\{B_1,\dots,B_5\}$ with $B_1=\{1,4\}$, $B_2=\{2\}$, $B_3=\{3\}$, $B_4=\{5\}$, $B_5=\{6\}$. Then $V=\{1,\dots,5\}$ and $E_{\s}=\{(1,2), (2,3), (1,4), (4,5)\}$. 
Note, moreover, that in this case 
$$
(f^{(1)} \otimes f^{(2)})_{\sigma}(p_1,\dots,p_5)= f^{(1)}(p_1,p_2,p_3) f^{(2)}(p_1,p_4,p_5), \quad p_1,\dots,p_5 \in \mathbb X.
$$

Let $\mathcal M_s:=\{k \in [|\s|]:\,J_s \cap B_k \neq \emptyset\},\, s \in [M]$. In our context, the introduction of the auxiliary graph described above yields that for each $(j_1,\dots,j_M) \in [m]^M$ and $\sigma \in \widetilde{\Pi}(q_{j_1},\dots,q_{j_M})$, the integral in \eqref{eq:cum_fidi} is given by 
\begin{align}
n^{|\s|} \E\Big[\prod_{s =1}^M \sum_{i=1}^h b_{ij_{s}} \prod_{k \in \mathcal M_s} A_k(t_i)\Big]\int_{W^{|\s|}}  \prod_{(k,\ell)\in E_{\s}}  \vp_n(x_k - x_{\ell}) \,\d\xx_{|\s|}.\label{eq:int_graphs}
\end{align}
{
	We shall give a few more details regarding the above equation: In \eqref{eq:cum_fidi}, the integration over $\mathbb X^{|\s|}$ formally is an integration over $(W\times \mathbb D([0,T],\{0,1\})\times[0,1]^{\mathbb N})^{|\s|}$ and the integral is split accordingly. The integrand then is the indicator that for an edge $(k,\ell)\in E_{\s}$ (the edge set of the graph induced by $\s$), there is an edge in the RCM between the points $P_k$ and $P_{\ell}$. The latter now depends on the marks $T_{k,\ell}$ as described above. The integral over $[0,1]^{|\s|}$ is then formally solved by writing the probability that $T_{k,\ell}\le\varphi_n(\cdot)$ as $\varphi_n(\cdot)$. Further, the expectation in the right-hand side of \eqref{eq:int_graphs} represents the integral over $[1,\infty)^{|\s|}$ so that the only ``visible'' integral is the one with respect to the Lebesgue measure $n^{|\s|}\d(x_1,\dots,x_{|\s|})$.  
}

The expectation in \eqref{eq:int_graphs} is handled as follows:
\begin{align*}
 \E\Big[\prod_{s =1}^M \sum_{i=1}^h b_{ij_{s}} \prod_{k \in \mathcal M_s} A_k(t_i)\Big]&=\sum_{i_1,\dots,i_M=1}^h \Big(\prod_{s =1}^M b_{i_{s}j_s}\Big) \E \Big[ \prod_{k \in \mathcal M_s} \prod_{s=1}^M A_k(t_{i_s})\Big]\\
    &\le\sum_{i_1,\dots,i_M=1}^h \Big(\prod_{s =1}^M b_{i_{s}j_s}\Big)  \E \Big[ \prod_{k\in \mathcal M_s}  A_k(t_{i_{s(k)}}) \Big],
\end{align*}
where $s(k)\in [M]$ is the minimum index $s$ with $J_{s} \cap B_k \neq \emptyset$. By stationarity of $(A_k(t))_{t\ge0}$ for all $k\in [|\s|]$ and independence of $(A_1(t))_{t>0},\dots,(A_{|\s|}(t))_{t>0}$, the above is given by
$$
\sum_{i_1,\dots,i_M=1}^h \Big(\prod_{s =1}^M b_{i_{s}j_s}\Big) \varrho^{|\s|} \le \varrho^{|\s|}  (h b_{\max})^M,
$$
where we recall that $\varrho =\E[A_k(0)]$ and $b_{\max}:=\max_{i,j} b_{ij}$.

To bound the integral over $W^{|\s|}$ in \eqref{eq:int_graphs}, we argue as for \cite[(6.2)]{heerten}. This yields
$$
\text{cum}(S_n^{(j_1)},\dots,S_n^{(j_M)}) \le |\widetilde{\Pi}(q_{j_1},\dots,q_{j_M})|\varrho^{|\s|}  (h b_{\max})^M n^{|\sigma|} \nu_n^{|\sigma|-1}.
$$
We next discuss the resulting bounds for $\cum_M(S_n)$ separately for the sparse and the dense regime. If $\nu_n \in \mathscr S$, then by \eqref{eq:cumM}, the definition of $\psi_{n,j}$ and since $|\s|\ge \min\{q_1,\dots,q_m\}=:p$,
\begin{align*}
    \cum_M(S_n) = \sum_{(j_1,\dots,j_M) \in [m]^M}\frac{\cum(S_n^{(j_1)},\dots,S_n^{(j_M)}) }{\psi_{n,j_1}\cdots \psi_{n,j_M}} \le C \frac{n^{|\s|}\nu_n^{|\s|-1} }{n^{Mp/2} \nu_n^{M(p-1)/2}}\le C (n^{p}\nu_n^{p-1})^{1-M/2},
\end{align*}
which tends to 0 as $n\to \infty$ for all $M \ge 3$. 

If $\nu_n \in \mathscr D$, then since $|\s| \le \sum_{i \in [M]}(q_{j_i}-1)+1$, with $Q_M:=\sum_{i \in [M]}q_{j_i}$,
\begin{align*}
    \cum_M(S_n) &= \sum_{(j_1,\dots,j_M) \in [m]^M}\frac{\cum(S_n^{(j_1)},\dots,S_n^{(j_M)}) }{\psi_{n,j_1}\cdots \psi_{n,j_M}} \le \sum_{(j_1,\dots,j_M)\in [m]^M} C \frac{n^{Q_M-M+1}\nu_n^{Q_M-M}}{n^{Q_M-M/2}\nu_n^{Q_M-M}}\\&\le Cn^{1-M/2},
\end{align*}
which also tends to 0 as $n\to \infty$ for all $M \ge 3$.
\end{proof}

\section{Tightness}\label{section_tightness}
Finally, to justify the claim of \netheo{main}, we apply the analogue of \cite[Theorem 13.5]{billingsley2013convergence} for vector-valued processes; in particular, the sufficient condition given in \cite[Eqn.\ (13.14)]{billingsley2013convergence}. According to this condition, {to prove that the sequence {$\vk \Gamma^{\star}_n(\cdot)$} is tight in $\mathbb{D}([0,T],\R^{m})$}, it suffices to show that for any {$0\leqslant r\leqslant s\leqslant t\leqslant T$,} 
\begin{align}   \E\left[\left\lvert\left\lvert\vk\Gamma^{\star}_n(r) - \vk\Gamma^{\star}_{n}(s)\right\rvert\right\rvert^2\left\lvert\left\lvert\vk\Gamma^{\star}_n(s) - \vk\Gamma^{\star}_{n}(t)\right\rvert\right\rvert^2\right]\leqslant C(t-r)^2\label{Tightness}
\end{align}
for some positive constant $C$.
Define, for any $n\in\N$, $i,j\in[m]$, $r,s,t\in[0,T]$,
\begin{align*}
    \Delta_{n,i,j}(r,s,t) := \E\biggl[\bigr(\Gamma_{n,i}(r) - \Gamma_{n,i}(s)\bigl)^2\bigr(\Gamma_{n,j}(s) - \Gamma_{n,j}(t)\bigl)^2\biggr],
\end{align*}
so that, due to \eqref{gamma_star_def}, as $n\nu_n \to \infty$,
\begin{align}   &\E\left[\left\lvert\left\lvert\vk\Gamma^{\star}_n(r) - \vk\Gamma^{\star}_{n}(s)\right\rvert\right\rvert^2\left\lvert\left\lvert\vk\Gamma^{\star}_n(s) - \vk\Gamma^{\star}_{n}(t)\right\rvert\right\rvert^2\right]\nonumber\\
&\quad= \sum_{i,j=1}^{m}\varrho^{-2(q_i+q_j)}\Delta_{n,i,j}(r,s,t) \begin{cases} (n^{2q_i + 2q_j-2}\nu_n^{2q_i+ 2q_j-4})^{-1}, & \nu_n\in{\mathscr D},\\
   (n^{q_i + q_j}\nu_n^{q_i + q_j-2})^{-1}, & \nu_n\in{\mathscr S}.
    \end{cases}\label{delta}
\end{align}
We define the (symmetric) function
$$
f^{(i)}(P_1,\dots,P_{q_i})=\frac{1}{q_i!} \Big(\prod_{k=1}^{q_i} A_{k}(r)-\prod_{k=1}^{q_i} A_{k}(s)\Big)\sum_{H \in {\bs G}_{G_i}([q_i])} \prod_{\substack{\{k,\ell\} \in E(H)\\X_k \prec X_{\ell}}} \hspace{-3pt}\one\{T_{k,\ell} \le \vp_n(X_k-X_{\ell})\}
$$
(and analogously $f^{(j)}$) and note that
\begin{align*}
    \Gamma_{n,i}(r) - \Gamma_{n,i}(s)=  \sum_{(P_1,\dots,P_{q_i})\in \eta_{n,\neq}^{q_i}} f^{(i)}(P_1,\dots,P_{q_i}),
\end{align*}
which is a $U$-statistic with mean zero (due to stationarity of the $A_k(\cdot),\, k \in [q_i]$), and an analogous representation holds for $\Gamma_{n,j}(r) - \Gamma_{n,j}(s)$. Hence, \cite[Theorem 3.7]{chaos} gives
\begin{align} \label{eq:intotimes}
\Delta_{n,i,j}(r,s,t)=\sum_{\s \in \ov{\Pi}(q_i,q_i,q_j,q_j)} \int_{\mathbb X^{|\s|}} \big(f^{(i)}\otimes f^{(i)}  \otimes f^{(j)} \otimes f^{(j)}\big)_{\s} \d \mu_n^{|\s|}.
\end{align}

Analogously to Section \ref{section:fdd_convergence}, we define an auxiliary graph on nodes $[|\s|]$ with edge set $E_{\s}$ for $\s \in \ov{\Pi}(q_i,q_i,q_j,q_j)$. Then, the integral in \eqref{eq:intotimes} is given by
\begin{align}
\frac{n^{|\s|}}{(q_i!)^2(q_j!)^2 } \E\Big[\prod_{\ell=1}^4 \Big(\prod_{h: J_{\ell} \cap B_h \neq \emptyset} \hspace{-.3cm} A_h(t_{\ell}) -\hspace{-.3cm} \prod_{h: J_{\ell} \cap B_h \neq \emptyset} \hspace{-.3cm} A_h(t'_{\ell}) \Big) \Big] \int_{W^{|\s|}} \prod_{(k,\ell)\in E_{\s}} \vp_n(x_k-x_{\ell}) \d(x_1,\dots,x_{|\s|}),\label{eq:tight}
\end{align}
where $t_1=t_2=r$, $t_3=t_4=t'_1=t'_2=s$ and $t'_3=t'_4=t$. Next, we bound the expectation and the integral above separately. For the expectation, we use that due to $A_h(\cdot) \in \{0,1\},\,h =1,\dots,|\s|$, 
\begin{align}
\E\Big[\prod_{\ell=1}^4 \Big(\prod_{h: J_{\ell} \cap B_h \neq \emptyset} A_h(t_{\ell}) -\prod_{h: J_{\ell} \cap B_h \neq \emptyset} A_h(t'_{\ell}) \Big) \Big]  &\le \E\Big[\prod_{\ell=1}^4 \sum_{h: J_{\ell} \cap B_h \neq \emptyset}  |A_h(t_{\ell})-A_h(t'_{\ell})| \Big]\nonumber\\
&\hspace{-3cm}= \sum_{h_1,h_2,h_3,h_4=1}^{|\s|} \E\Big[\prod_{\ell=1}^4 \one\{J_{\ell} \cap B_{h_{\ell}} \neq \emptyset\} |A_{h_{\ell}}(t_{\ell}) - A_{h_{\ell}}(t'_{\ell})|\Big].\label{eq:tight_exp}
\end{align}
For the latter expectation to be non-zero, we must have $h_1=h_2$ and $h_3=h_4$, or $h_1=h_3$ and $h_2=h_4$, or $h_1=h_4$ and $h_2=h_3$. If $h_1=h_2$ and $h_3=h_4$, we have by \eqref{a_single},
\begin{align*}
\E\Big[\prod_{\ell=1}^4 \one\{J_{\ell} \cap B_{h_{\ell}} \neq \emptyset\} |A_{h_{\ell}}(t_{\ell}) - A_{h_{\ell}}(t'_{\ell})|\Big] &\le \E[|A_{h_1}(r) - A_{h_{1}}(s)|^2] \E[|A_{h_3}(s) - A_{h_{3}}(t)|^2]\\ &\le \mathcal{C}^2 |r-s||s-t|.
\end{align*}
If $h_1=h_3$ and $h_2=h_4$, or $h_1=h_4$ and $h_2=h_3$, we find by \eqref{a_double},
\begin{align*}
\E\Big[\prod_{\ell=1}^4 \one\{J_{\ell} \cap B_{h_{\ell}} \neq \emptyset\} |A_{h_{\ell}}(t_{\ell}) - A_{h_{\ell}}(t'_{\ell})|\Big] &\le \E[(|A_{h_1}(r) - A_{h_{1}}(s)||A_{h_1}(s) - A_{h_{1}}(t)|)^2]\\ &\le \mathcal{C}^2 |r-t|^2.
\end{align*}
We conclude that \eqref{eq:tight_exp} is bounded by $|\s|^4 \mathcal{C}^2 |r-t|^2$.

In order to deal with the integral in \eqref{eq:tight}, we distinguish between the scenarios where the graph generated by $\s$ is connected or not. Since $\s \in \overline{\Pi}(q_i,q_i,q_j,q_j)$, for each $\ell\in\{1,\hdots,4\}$ there exists a block $B\in\sigma$ with $|B|\geq 2$ and $B\cap J_\ell \neq\varnothing$. This implies that the graph generated by $\s$ is either connected or has exactly two connected components. If it is connected, we obtain by Lemma \ref{pF_asmpt} that there is some constant $\beta_1=\beta_1(G_i,G_j)>0$ such that
$$
I_n(\s):=\int_{W^{|\s|}} \prod_{(k,\ell)\in E_{\s}} \vp_n(x_k-x_{\ell}) \d(x_1,\dots,x_{|\s|}) \sim \beta_1 \nu_n^{|\s|-1}.
$$
If instead the graph generated by $\s$ is disconnected, then it has exactly two connected components. Then we can decompose it into two disjoint connected subgraphs with vertex set $E'$ and $E''$, say. In this case we apply Lemma \ref{pF_asmpt} twice, and obtain for a constant $\beta_2>0$ that 
\begin{align}
  I_n(\s)= \int_{W^{|\s|}} \Big(\prod_{(k,\ell)\in E'} \vp_n(x_k-x_{\ell}) \Big) \Big(\prod_{(k,\ell)\in E''} \vp_n(x_k-x_{\ell}) \Big)\d(x_1,\dots,x_{|\s|}) \sim\beta_2 \nu_n^{|\s|-2}.
\end{align}
Note that if the graph generated by $\s$ is connected, then $|\s|\in \{\max(q_i,q_j),2q_i+2q_j-3\}$, and if it has two connected components, then $|\s|\in \{q_i+q_j,2q_i+2q_j-2\}$. This gives for $n$ large enough,
\begin{align}\label{eq:tight2}
    &n^{|\s|}I_n(\s)\le \begin{cases} \beta_2 n^{2q_i+2q_j-2}\nu_n^{2q_i+2q_j-4},\quad & \nu_n\in{\mathscr D},\\    \max(\beta_1n(n\nu_n)^{\max(q_i,q_j)-1},\beta_2n^2(n\nu_n)^{q_i+q_j-2}), \quad &\nu_n\in{\mathscr S}.
    \end{cases}
\end{align}
The assumption $n(n\nu_n)^{q_i-1} \to \infty$, $i \in [m]$, guarantees that the maximum in the second case is attained at $\beta_2n^2(n\nu_n)^{q_i+q_j-2}$. Thus, the $n$- and $\nu_n$-terms in \eqref{delta} cancel out with those from \eqref{eq:tight2}, which establishes the desired bound in \eqref{Tightness}.

\section{Subgraph ratio process}
\proofprop{korr:clustering_coefficient}
    For any $t\in[0,T]$ and $n\in\N$ random variable $C_{n,G_1,G_2}(t)$ is well-defined everywhere except on the event $\{\Gamma_{n,1} = 0\}\cap\{\Gamma_{n,2} = 0\}$. Bearing in mind that
    \[ 
        \lim_{n\to\infty}\pk{\inf_{t\in[0,T]}\Gamma_{n,1}(t) = 0,\, \inf_{t\in[0,T]}\Gamma_{n,2}(t) = 0}=0,\]
        the stated functional asymptotic Gaussianity is a direct application of the functional version of the delta method. {We now apply \netheo{main} with $\psi^\circ_n:=  \nu_n^{q-1}(\varrho n)^{q}$, and recall that,  using Lemmas \ref{pF_asmpt} and \ref{lem:exp}, $\mu_{n,i}(t) = \psi^\circ_n {F(G_i)}/{a_i}+o(\psi^\circ_n)$ as $n\to\infty$ for both $i=1,2$ uniformly in $t$. It thus follows that
        \begin{align*}
            \frac{\psi^{\circ}_n}{\psi_n}\left(\frac{\vk\Gamma_n(\cdot)}{\psi^\circ_n} - \vk v\right)\to \vk\Gamma(\cdot)
        \end{align*}
        as $n\to\infty$, in distribution in $\mathbb{D}([0,T],\R^{m})$, where we denote $\vk v := (F(G_1)/a_1,\,F(G_2)/a_2)^{\top}$. Hence, applying the delta method to the function $h(\vk x) := a_1x_1/a_2x_2$, as $n\to\infty$,
        \begin{align*}
            \frac{\psi_n^{\circ}}{\psi_n}\biggl(h\left(\frac{\vk\Gamma_n(\cdot)}{\psi_n^{\circ}}\right) - h\left(\vk v\right)\biggr)\to \vk\Gamma^{\circ}(\cdot)
        \end{align*}
        in distribution in $\mathbb{D}([0,T],\R^{m})$, where $\vk\Gamma^{\circ}(\cdot)$ is a centered Gaussian process with covariance function
        \begin{align*}
            \Sigma^{\circ}(s,t) = \nabla h\left(\vk v\right)^{\top}\Sigma(s,t)\,\nabla h\left(\vk v\right).
        \end{align*}
        Straightforward calculations show that $\Sigma^{\circ}(\cdot,\cdot) = \Sigma^{C}(\cdot,\cdot)$, and in case $\nu_n\in\mathscr{D}$, due to Remark \ref{rem_corr}, we find $\Sigma^{\circ}(s,t) = 0$ for any $s,t$, so that $\vk\Gamma^{\circ}(\cdot) = C_{G_1,G_2}(\cdot)$ in distribution. This establishes the claim as  $\psi_n^\circ/\psi_n=\zeta_n$ and $C_{n,G_1,G_2}(\cdot) = h(\vk \Gamma_n(\cdot)) = h(\vk \Gamma_n(\cdot)/\psi^{\circ}_n)$.\qed}

\bigskip

\bibliographystyle{abbrv}
\bibliography{lit}
\end{document}